\newcommand{\Section}[1]{\section{#1} \setcounter{equation}{0}}
\newtheorem{thm}{Theorem}
\newtheorem{lem}{Lemma}[section]
\theoremstyle{remark}
\theoremstyle{definition}
\begin{document}
\large
\title{Representation of Integers by a Family of Cubic Forms in Seven Variables II}
\author{Manoj Verma}
\address{MANOJ VERMA: Institute of Mathematical Sciences,
CIT Campus, Taramani, Chennai 600113, India}
\email{mverma@imsc.res.in}

\begin{abstract} In an earlier paper \cite{c7i} we derived asymptotic formulas for the number of representations of zero and of large positive integers by the cubic forms in seven variables which can be written as $L_1(x_1,x_2,x_3) Q_1(x_1,x_2,x_3)+ L_2(x_4,x_5,x_6) Q_2(x_4,x_5,x_6) + a_7 x_7^3$ where $L_1$ and $L_2$ are linear forms, $Q_1$ and $Q_2$ are quadratic forms and $a_7$ is a non-zero integer and for which certain quantities related to $L_1Q_1$ and $L_2Q_2$ were non-zero. In this paper, we consider the case when one or both of these quantities is zero but $L_1Q_1$ and $L_2Q_2$ are still nondegenerate cubic forms in three variables.\\
\\\\
Mathematics Subject Classification 2000: Primary 11D45; Secondary 11D85, 11P55.
\end{abstract}
\maketitle
\Section {Introduction}
\label{intro}
\noindent In an earlier paper \cite{c7i} we derived asymptotic formulas for the number of representations of zero inside a box $|x_i|\leq P$ and the number of
representations of a large positive integer $N$ with
each $x_i \in I$ where $I$ $=\{1,2,\ldots,P\}$ or $\{0,1,2,\ldots,P\}$ or
$\{-P,\ldots,-1,0,1,\ldots,P\}$ with $P=[N^{1/3}]$ by the cubic forms which can be written as
\begin {equation} \label{e:one1}
f({\bf x}) =
L_1%(x_1,x_2,x_3)
Q_1%(x_1,x_2,x_3)
+L_2%(x_4,x_5,x_6)
Q_2%(x_4,x_5,x_6)
+a_7x_7^3
\end {equation}
where, in case the arguments are not mentioned explicitly, as here,
$$L_1=L_1(x_1,x_2,x_3)=a_1x_1+a_2x_2+a_3x_3,$$
$$L_2=L_2(x_4,x_5,x_6)=a_4x_4+a_5x_5+a_6x_6$$ are linear
forms in three variables,
$$Q_1=Q_1(x_1,x_2,x_3)=A_1
x_1^2+A_2x_2^2+A_3x_3^2+B_1x_2x_3+B_2x_3x_1+B_3x_1x_2,$$
$$Q_2=Q_2(x_4,x_5,x_6)=A_4x_4^2+A_5x_5^2+A_6x_6^2+B_4x_5x_6+B_5x_6x_4+B_6x_4x_5$$ are
quadratic forms in three variables and $a_7$ is a non-zero integer and for which certain quantities related to $L_1Q_1$ and $L_2Q_2$, denoted by $\Delta_1$ and $\Delta_2$ in \cite{c7i} and defined by
\begin{equation}\label{e:one2}
\Delta_1={\bf a}_1{\bf  M}_1{\bf  a}_1^T, \mbox{ } \Delta_2={\bf a}_2{\bf  M}_2{\bf  a}_2^T
\end{equation} were non-zero; here ${\bf a}_1=(a_1, a_2, a_3)$, ${\bf a}_2=(a_4, a_5, a_6)$,
${\bf M}_1$ is the
matrix of the adjoint of the quadratic form $2Q_1$ and ${\bf M}_2$ is the
matrix of the adjoint of the quadratic form $2Q_2$:
$${\bf M}_1 =\left[ \begin{array}{ccc}
B_1^2-4A_2A_3 & 2A_3B_3-B_1B_2 & 2A_2B_2-B_1B_3\\
2A_3B_3-B_1B_2 & B_2^2-4A_1A_3 & 2A_1B_1-B_2B_3\\
2A_2B_2-B_1B_3 & 2A_1B_1-B_2B_3 & B_3^2-4A_1A_2
\end{array}\right],$$
and $M_2$ is given similarly. In this paper we consider the case $\Delta_1=0$ ($\Delta_2$ might or might not be zero). Since the only places where the assumption that $\Delta_1\Delta_2 \neq 0$ was used in \cite{c7i} were the treatment of the minor arcs and the singular series%(specifically the bounds for the complete exponential sums $S_1(q,a)$ and $S_2(q,a)$)
, we only derive the corresponding estimates for them in case $\Delta_1$ or $\Delta_2$ is zero that suffice to prove the asymptotic formula and omit other details which are the same as in case $\Delta_1\Delta_2 \neq 0$%, albeit/perhaps with a weaker error term
.\\
\indent As in \cite{c7i},
% in view of the examples such as $$(-x_1-x_2-x_3)(x_1^2+x_2^2+x_3^2)-x_4x_5x_6-x_7^3,$$
when considering representation of large positive integers with $x_i$ restricted
to be positive or non-negative, we will assume that $f(x_1,\ldots,x_7)>0$ for a positive proportion of 7-tuples $(x_1,\ldots,x_7)$ with $1 \leq x_i \leq P$. By homogeneity and continuity of $f$
as a function of 7 real variables, this is equivalent to assuming that $f(x_1,\ldots,x_7)>0$
for some $(x_1,\ldots,x_7)\in {\mathbb R}_{\ge 0}^7$. No such condition would be needed if we allow the $x_i$ to take non-positive values as well and require only that $|x_i|\leq
N^{1/3}$. Congruence conditions are still the same and are described in Theorem 2.\\
\indent As usual, $e(\alpha)=e^{2\pi i \alpha}$,
$\varepsilon$ can take any positive real value in
any statement in which it appears and the symbols $\ll$ and $O$ have their usual meanings
with the implicit constants depending on the cubic form $f$ and $\varepsilon$;
any other dependence will be mentioned explicitly through subscripts. By the content of a polynomial (in any number of variables) we shall mean the greatest common divisor of all its coefficients.\\
\indent Since the $h$-invariant of the cubic form $f$ in (\ref{e:one1}) is 3, the hypersurface $f=0$ has one or more four-dimensional rational linear spaces contained in it%, viz. the space $\{L_1=L_2=x_7=0\}$. There might be other four-dimensional linear spaces contained in the cubic hypersurface $f=0$
. We now describe these spaces and the conditions under which they would arise in terms of $\Delta_2, A'', B'', C'', F'', G''$ (defined in (\ref{e:one2}) above and (\ref{e:two22})-(\ref{e:two27}) below).  (Note that $Q_1$ does not factorize)% so the set $\{L_1Q_1=0\}$ contains only one six-dimensional linear spaces in it, viz the space $\{L_1=0\}$
.\\
$(1)$ in all cases, the space $\{L_1=L_2=x_7=0\}$;\\
$(2,3)$ if $Q_2$ factorizes, say $Q_2=L_2'L_2''$ (this happens precisely when $\Delta_2$ is a positive square and $D''=0$), then $\{L_1=L_2'=x_7=0\}$ and $\{L_1=L_2''=x_7=0\}$;\\
$(2',3')(i)$ if $\Delta_2$ is a positive square, say $\Delta_2=d^2$, $A''\neq 0$ and $D''/4A''a_4^4\Delta_2a_7$ is the cube of a non-zero rational number, say $D''/4A''a_4^4\Delta_2a_7=d_1^3/d_2^3$, then $\{L_1=a_4dx_5'+x_6'=d_1L_2+d_2x_7=0\}$ and $\{L_1=a_4dx_5'-x_6'=d_1L_2+d_2x_7=0\}$ with $x_5', x_6'$ as in (\ref{e:two23}) (see also (\ref{e:two24}))\\
$(2',3')(ii)$ if $\Delta_2$ is a positive square, say $\Delta_2=d^2$,  $A''=0, C''\neq 0$ and $D''/4C''a_4^4\Delta_2a_7$ is the cube of a non-zero rational number, say $D''/4C''a_4^4\Delta_2a_7=d_1^3/d_2^3$, then $\{L_1=a_4dx_6'+x_5'=d_1L_2+d_2x_7=0\}$ and $\{a_4dx_6'-x_5'=d_1L_1+d_2x_7=0\}$ with  $x_5', x_6'$ as in (\ref{e:two25}) (see also (\ref{e:two26}))\\
$(2',3')(iii)$ if $\Delta_2$ is a positive square, say $\Delta_2=d^2$,  $A''=C''=0, B''\neq 0$ and $D''/B''a_4^2a_7$ is the cube of a non-zero rational number, say $D''/B''a_4^2a_7=d_1^3/d_2^3$, then  $\{L_1=x_5'=d_1L_2+d_2x_7=0\}$ and $\{L_1=x_6'=d_1L_2+d_2x_7=0\}$ with $x_5', x_6'$ as in (\ref{e:two27}) (see also (\ref{e:two28})).\\
\indent The integral points in any of these four-dimensional linear spaces contained in the variety $f=0$ form a four-dimensional lattice and the number of the lattice points in such a space that are contained inside a box $x_i\in I$ is $\sigma P^4+O(P^3)$ for some $\sigma \geq 0$ depending on the space and also on $I$ (we have $\sigma>0$ if $I=\{-P,\ldots,-1,0,1,\ldots,P\}$).\\
\indent Since $Q_1$ does not factorize, it is easily seen by diagonalizing $Q_1$ %and using the bounds in the Appendix of Estermann \cite{Estermann} 
that
\begin{equation*}
\mbox{Card}\{(x_1,x_2,x_3)\in I^3: Q_1(x_1,x_2,x_3)=0\} \ll P^{1+\varepsilon}
\end{equation*}
and a similar statement holds for $Q_2$ in case $Q_2$ does not factorize.\\
\indent Considering all the cases, we see that there exist non-negative constants $\delta_0, \delta_1, \delta_2, \delta_3, \delta_4$ (depending on $f$ and $I$) such that the number of the lattice points on the hypersurface $f=0$ that are contained inside a box $x_i\in I$ and also in the union of all the linear spaces mentioned above is $\delta_0 P^4+O(P^{3})$, the number of those from the cases $(1,2,3)$ is $\delta_1 P^4+O(P^{3})$, the number of those from the cases $(2',3')$ is $\delta_2 P^4+O(P^{3})$, $\delta_2=0$ if the cases $(2',3')$ do not arise,
\begin{equation}\label{e:one3}
N_1:=N_1(I)=\mbox{Card}\{(x_1,x_2,x_3)\in I^3:L_1%(x_1,x_2,x_3)
Q_1%(x_1,x_2,x_3)
=0\}=\delta_3 P^2+O(P^{1+\varepsilon}),
\end{equation}
\begin{equation}\label{e:one4}
N_2:=N_2(I)=\mbox{Card}\{(x_4,x_5,x_6)\in I^3:L_2%(x_4,x_5,x_6)
Q_2%(x_4,x_5,x_6)
=0\}=\delta_4 P^2+O(P^{1+\varepsilon}),
\end{equation} and
\begin{equation} \label{e:one0}
N_1N_2=\mbox{Card}\{(x_1,\ldots,x_6)\in I^6:L_1Q_1=L_2Q_2=0\}=\delta_1 P^4+O(P^{3+\varepsilon}).
\end{equation}
We have $\delta_1=\delta_3\delta_4$  and $\delta_0=\delta_1+\delta_2$. Also, $\delta_1>0$ if $I=\{-P,\ldots,-1,0,1,\ldots,P\}$.
We are now ready to state our main results.\\
\begin{thm} \label{t:one1}
The number of representations  $R(N)$ of a large positive integer $N$ by the cubic form $f$ in
(\ref{e:one1}) with $x_i \in I$ satisfies
\begin{equation*}
R(N)=\delta_1 N^{4/3} \chi(N)+N^{4/3}\mathfrak{S}(N)J_i+O(N^{{4/3-1/48+\varepsilon}})
\end{equation*}
where $\delta_1\geq 0$ is the constant in (\ref{e:one0}),
\begin{equation}\label{e:one5}
\chi(N)=
\left\{
\begin{array}{cl} 1 & \mbox{if } N=a_7x^3 \mbox{ for some } x\in I\\
0 & \mbox{otherwise}
\end{array}\right.
\end{equation}
is the indicator function of the set $T=\{a_7x^3:x \in I\}$,
$${\mathfrak{S}}(N)=\sum_{q=1}^{\infty}\sum_{\substack{a=1\\(a,q)=1}}
^{q}
q^{-7}S(q,a)e(-aN/q) \mbox{ with }S(q,a)=\sum_{{\bf z} \bmod q} e\left( \frac{a}{q}f({\bf z})\right),$$
$$i=\left\{
\begin{array}{ll} 1 & \mbox{if } I=\{1,2,\ldots,P\} \mbox{ or
}\{0,1,2,\ldots,P\}\\
2 & \mbox{if } I=\{-P,\ldots,-1,0,1,\ldots,P\}
\end{array}\right.,$$
$$J_1=\int_{-\infty}^{\infty}\left( \int_{[0,1]^7}e(\gamma f({\bf \xi}))\,d{\bf
\xi} \right) e(-\gamma)d\gamma$$
and
$$J_2=\int_{-\infty}^{\infty}\left( \int_{[-1,1]^7}e(\gamma f({\bf \xi}))\,d{\bf
\xi} \right) e(-\gamma)d\gamma.$$
Here, $J_2>0$; $J_1>0$ if
$f(x_1,\ldots,x_7)>0$ for some $(x_1,\ldots,x_7)\in {\mathbb R}_{\ge 0}^7$,
$J_1=0$ otherwise;
$1 \ll \mathfrak{S}(N) \ll 1$ if the equation $f({\bf x})=N$ has $p$-adic
solutions for all primes $p$, ${\mathfrak{S}}(N)=0$ otherwise.
\end{thm}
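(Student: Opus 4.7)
The plan is to apply the Hardy--Littlewood circle method as in \cite{c7i}, first isolating the extra main term via a decomposition of the Weyl sums. Writing
$$R(N)=\int_0^1 T(\alpha)\,e(-\alpha N)\,d\alpha,\quad T(\alpha) = T_1(\alpha)T_2(\alpha)T_3(\alpha),$$
with $T_j(\alpha) = \sum_{\mathbf{x}\in I^3} e(\alpha L_j Q_j)$ for $j = 1, 2$ and $T_3(\alpha) = \sum_{x\in I} e(\alpha a_7 x^3)$, decompose $T_j = N_j + T_j^*$, where $T_j^*$ sums over $\mathbf{x}\in I^3$ with $L_j Q_j \neq 0$. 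Expanding gives $R(N) = R_{00} + R_{01} + R_{10} + R_{11}$. The ``pure-zero'' piece is
$$R_{00} = N_1 N_2\,\chi(N) = \delta_1 N^{4/3}\chi(N) + O(N^{1+\varepsilon})$$
by (\ref{e:one0}) and the definition of $\chi$, which is exactly the extra main term. The ``mixed'' pieces $R_{01}, R_{10}$ count representations of $N$ by the four-variable cubic $L_{3-j}Q_{3-j}+a_7x_7^3$ with $L_{3-j}Q_{3-j}\neq 0$; a divisor-bound argument (for each $x_7$, split by $d = L_{3-j}(\mathbf{y})\mid N - a_7x_7^3$, then on the plane $L_{3-j}=d$ the equation $Q_{3-j}=(N-a_7x_7^3)/d$ has $O(P^\varepsilon)$ solutions when $\Delta_{3-j}\ne 0$, a more delicate analysis when $\Delta_{3-j}=0$) shows these are $O(N^{1+\varepsilon})$ and hence absorbed in the final error.

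The ``generic'' piece $R_{11} = \int_0^1 T_1^* T_2^* T_3\,e(-\alpha N)\,d\alpha$ is the object of the circle method proper. Dissect $[0,1]$ into Farey-type major arcs $\mathfrak{M}$ and minor arcs $\mathfrak{m}$ with the parameters of \cite{c7i}. The major-arc analysis (Euler--Maclaurin on each factor near rationals $a/q$) produces $\mathfrak{S}(N)J_iN^{4/3} + O(N^{4/3-\eta})$, and the qualitative statements on $J_i$ (positivity under the real-solubility hypothesis; $J_2>0$ always) and $\mathfrak{S}(N)\asymp 1$ iff $p$-adic solubility holds at every prime follow exactly as in \cite{c7i}. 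The only places where the hypothesis $\Delta_1\Delta_2\neq 0$ was used in \cite{c7i} are the absolute convergence $\mathfrak{S}(N)\ll 1$ and the minor-arc bound $\int_{\mathfrak{m}}|T_1^*T_2^*T_3|\,d\alpha \ll N^{4/3 - 1/48 + \varepsilon}$, and these two items are what must be redone when $\Delta_1=0$.

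For the singular series, the local Gauss sum $S_1(p^k,a) = \sum_{\mathbf{x}\bmod p^k} e(aL_1Q_1/p^k)$ is re-bounded by a $p$-adic stationary-phase / Hensel argument that exploits the nondegeneracy of $L_1Q_1$ as a cubic form (rather than the vanishing invariant $\Delta_1$), preserving enough decay in $p^k$ for $\mathfrak{S}(N)\ll 1$. For the minor arcs, the pointwise cubic Weyl bound on $T_1(\alpha)$ in \cite{c7i} came from Weyl differencing that produced an auxiliary quadratic form whose discriminant is essentially $\Delta_1$; with $\Delta_1=0$ this route is unavailable, so I would instead estimate a mean value $\int_0^1|T_1^*(\alpha)|^4\,d\alpha$ by counting solutions of $L_1Q_1(\mathbf{x}_1)+L_1Q_1(\mathbf{x}_2)=L_1Q_1(\mathbf{x}_3)+L_1Q_1(\mathbf{x}_4)$ in $I^{12}$. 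A linear change of variables making $L_1$ a coordinate reduces the inner sum to a quadratic Weyl sum amenable to classical estimates, and Cauchy--Schwarz then combines this with pointwise bounds on $T_2, T_3$ (or a parallel mean-value argument if $\Delta_2=0$ too) to yield the required minor-arc estimate. The main obstacle is precisely this mean-value estimate, since the degeneracy $\Delta_1=0$ forces a hands-on geometric analysis of the reducible but non-factorizing cubic $L_1Q_1$ in place of the standard cubic Weyl machinery.
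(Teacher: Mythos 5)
Your skeleton matches the paper's: the same four-fold decomposition $F=(N_1+F_1)(N_2+F_2)F_3$, the same identification of the term $N_1N_2\chi(N)=\delta_1P^4\chi(N)+O(P^{3+\varepsilon})$, and the same diagnosis that only the minor arcs and the singular series need to be redone when $\Delta_1=0$. But the two technical cruxes are not correctly handled. First, the minor arcs: your proposed fourth moment cannot work quantitatively. The count of solutions of $L_1Q_1(\mathbf{x}_1)+L_1Q_1(\mathbf{x}_2)=L_1Q_1(\mathbf{x}_3)+L_1Q_1(\mathbf{x}_4)$ in $I^{12}$ is $\gg P^{9}$ (by Cauchy--Schwarz applied to the representation function of $L_1Q_1(\mathbf{x}_1)+L_1Q_1(\mathbf{x}_2)$), so $\bigl(\int_0^1|T_1^*|^4\bigr)^{1/4}\gg P^{9/4}$, and any H\"older combination with $T_2^*$ and the Weyl-saving $\sup_{\mathfrak m}|T_3|\ll P^{1-\delta/4+\varepsilon}$ lands you at $P^{9/2+1-\delta/4}$ or worse, far above the target $P^{4-\eta}$. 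What is needed (and what the paper proves as its Lemma 3.1) is the \emph{second} moment $\int_0^1|F_i|^2\,d\alpha=\sum_n c_i(n)^2\ll P^{3+\varepsilon}$; in the degenerate case this is obtained from the normal form $4A'a_1^2\,L_1Q_1=x_1'(x_1'x_3'+x_2'^2)$ by a divisor-weighted Cauchy--Schwarz that reduces the count to $x(y-y')=z'^2-z^2$, which is $O(P^{3+\varepsilon})$ by inspection. Then Cauchy--Schwarz in the form $\sup|F_3|\cdot\|F_1\|_2\|F_2\|_2$ gives $P^{4+\varepsilon-\delta/4}$.

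Second, the mixed terms $N_{3-j}\int_0^1 F_jF_3$: your divisor-bound sketch does not close in the degenerate case, and the claimed $O(N^{1+\varepsilon})$ is unjustified. For fixed $x_7$ and fixed divisor $x=L_1$ of $N-a_7x_7^3$, the degenerate equation becomes $x_1'x_3'+x_2'^2=m$ with $x_1'=x$ fixed, i.e.\ a congruence $x_2'^2\equiv m\pmod{x}$; the number of $x_2'$ in the box is $\ll P/\sqrt{|x|}+\sqrt{|x|}$, which for small $|x|$ is of order $P$, not $P^{\varepsilon}$. A straight divisor argument therefore gives only $P^{2+\varepsilon}$ for the four-variable count, hence $P^{4+\varepsilon}$ after multiplying by $N_{3-j}\ll P^2$ --- not absorbable in the error term. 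The paper's Lemma 3.4 splits at $|x|\le P^{\theta}$ and invokes Vaughan's theorem on integer points on elliptic curves (applied to $xz^2=-a_7w^3+N-x^2y$) for the small-$x$ range, obtaining $P^{11/6+\varepsilon}$ with $\theta=1/3$; this is the missing ingredient. (A smaller remark on the singular series: for $\Delta_1=0$ and good primes the local sum is computed \emph{exactly} as $S_1(p^k,a)=p^{2k+\lfloor k/4\rfloor}$, so square-root-cancellation or Hensel heuristics in the style of the nondegenerate case fail; one must check that the weaker bound $q^{9/4}$ still yields absolute convergence, which it does.)
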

\begin{thm} \label{t:one2}
If $f({\bf x})=c(c_1L_1'Q_1'+c_2L_2'Q_2'+c_3x_7^3)$
where $c$ is the content of $f$ while $L_1'Q_1'$, $L_2'Q_2'$ have content $1$, then the equation
$f({\bf x})=N$ has $p$-adic solutions for all
primes $p$ if the congruence
\begin{equation*}
f({\bf x})\equiv N \left( \mbox{ mod }c\prod_{p|3c_1c_2c_3}p^{\gamma(p)} \right)
%f({\bf x})\equiv N \pmod {c\prod_{p|3c_1c_2c_3}p^{\gamma(p)}}
\end{equation*}
has a solution and such a solution does exist if 
$N$ is divisible by $c\prod_{p|3c_1c_2c_3}p^{\gamma'(p)}$. For each prime $p|3c_1c_2c_3$, $\gamma(p)$ and $\gamma'(p)$ are as defined in Section 5.%Lemma 7.3 of \cite{c7i}.
\end{thm}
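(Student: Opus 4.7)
The plan is to establish $p$-adic solvability of $f(\mathbf{x})=N$ prime by prime and then assemble the conclusion via the Chinese Remainder Theorem. The first step is the obvious normalization: writing $f=c\tilde f$ reduces the question to showing $\tilde f(\mathbf{x})=N/c$ is solvable in $\mathbb{Z}_p$ for every $p$, where $\tilde f=c_1L_1'Q_1'+c_2L_2'Q_2'+c_3x_7^3$ with $\tilde f$ primitive. From here the argument splits into ``good'' and ``bad'' primes, and the statement naturally puts the composite modulus $c\prod_{p\mid 3c_1c_2c_3}p^{\gamma(p)}$ on exactly the primes where a Hensel obstruction can appear.

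For primes $p\nmid 3c_1c_2c_3$, I would show directly that every $N$ is represented in $\mathbb{Z}_p$. The key inputs are: (i) $x_7^3$ hits a subgroup of $(\mathbb{Z}/p)^\times$ of index at most $3$, and equals the whole group when $p\not\equiv 1\pmod 3$; (ii) $L_1'Q_1'$ and $L_2'Q_2'$ are primitive cubics in three variables mod $p$, so in particular the linear form $L_1'$ has a unit coefficient; freezing two of $x_1,x_2,x_3$ and varying the third sweeps $L_1'$ through every residue class while, on a Zariski-dense set of freezings, $Q_1'$ is a unit. Picking $(x_1,x_2,x_3)$ and $(x_4,x_5,x_6)$ so that $c_1L_1'Q_1'+c_2L_2'Q_2'$ covers a full set of representatives of $(\mathbb{Z}/p)^\times/(\mathbb{Z}/p)^{\times 3}$ and then choosing $x_7$ gives a solution mod $p$. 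The partial derivative $\partial \tilde f/\partial x_1=c_1(a_1'Q_1'+L_1'\,\partial Q_1'/\partial x_1)$ is a unit on a Zariski-dense set, so the mod $p$ solution can be adjusted to satisfy the standard Hensel hypothesis and then lifted to $\mathbb{Z}_p$.

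For primes $p\mid 3c_1c_2c_3$, I would define $\gamma(p)$ as an explicit ``Hensel radius'': the smallest integer such that any zero of $\tilde f-N$ modulo $p^{\gamma(p)}$ lies within Hensel distance of a genuine $p$-adic zero. Writing $\tau_p$ for the maximum $p$-adic valuation of the coefficients that spoil non-degeneracy of the relevant partial derivative, the natural choice is $\gamma(p)=2\tau_p+1$, which is the exponent required by the strong (Newton-polygon) form of Hensel's lemma applied to $\partial \tilde f/\partial x_j$ for an appropriate $j$. For $\gamma'(p)$, I would specialise $x_1=\cdots=x_6=0$, reducing the congruence to $cc_3x_7^3\equiv N\pmod{p^{\gamma(p)}}$; this is solvable whenever $v_p(N)\geq\gamma'(p)$ for a $\gamma'(p)$ of the form $v_p(cc_3)+3\lceil\gamma(p)/3\rceil$, yielding the claimed divisibility condition on $N$. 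Combining the finitely many mod $p^{\gamma(p)}$ congruences and the mod $c$ divisibility via CRT recovers the single congruence appearing in the statement.

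The main obstacle will be the case $p=3$ (which automatically divides $3c_1c_2c_3$): the cube map is very far from surjective on $\mathbb{Z}_3$, and the contribution of $c_3x_7^3$ to any partial derivative of $\tilde f$ carries the factor $3x_7^2$, so naive Hensel lifting in the $x_7$ direction fails and the Hensel argument must instead be organised around a partial derivative coming from $L_i'Q_i'$, forcing $\gamma(3)$ to absorb the extra valuations. A secondary difficulty is at primes $p\mid c_i$, where $\tilde f$ mod $p$ degenerates to a cubic in four variables; there one must clear $p^{v_p(c_i)}$ from the equation and rerun the three-variable argument on the surviving $L_{i'}'Q_{i'}'+x_7^3$, using precisely the primitivity forced by the initial extraction of the content.
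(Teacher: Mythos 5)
The paper does not actually reprove this theorem: Section 5 states that ``the congruence conditions are still given by Theorem 2 of \cite{c7i}'' and only repeats the definitions of $\gamma$ and $\gamma'$. So the comparison has to be against those definitions, and that is where your proposal breaks down.

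The essential gap is that you never engage with the specific $\gamma(p)$ and $\gamma'(p)$ that the theorem asserts; you replace them with placeholders of your own (``$\gamma(p)=2\tau_p+1$'', ``the smallest integer such that any zero mod $p^{\gamma(p)}$ lies within Hensel distance of a genuine zero''). The second of these is circular --- it defines $\gamma(p)$ to be whatever makes the theorem true --- and the first is an unproved claim: a solution of $\tilde f\equiv N\pmod{p^{2\tau+1}}$ can sit at a point where \emph{all} partial derivatives have large valuation (e.g.\ deep in the singular locus $L_1'=L_2'=x_7\equiv 0$), and showing that every solution of the congruence at the stated level can be replaced or perturbed into one satisfying a Hensel hypothesis is precisely the content of the case analysis (i)--(iv) in Section 5, with its quantities $\alpha_1,\beta_1$ measuring how far $Q_1'$ degenerates against $L_1'$ modulo $p$ and the resulting exponents $\lceil(5\alpha_1+1)/3\rceil$, $\lceil(4\alpha_1+1-\beta_1)/2\rceil$. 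A proof of the theorem as stated must derive exactly these exponents; your sketch cannot produce them.

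The sufficiency half is also routed through the wrong variable. You specialise $x_1=\cdots=x_6=0$ and try to solve $cc_3x_7^3\equiv N$, which would make $\gamma'$ depend on $j_3=v_p(c_3)$; but the paper's $\gamma'=\min\{\gamma_1'+j_1,\gamma_2'+j_2\}$ involves only the two pieces $c_iL_i'Q_i'$, signalling that the intended construction sets five of $x_1,\dots,x_6$ and $x_7$ to zero and solves $cc_iL_i'Q_i'\equiv N$ using the substitution $x_1'=L_1'$ and the normal forms of Section 2, where a nonsingular solution can be exhibited. Your route fails concretely at $p=3$ (which always divides $3c_1c_2c_3$): cubes modulo $3^k$ are a very thin set, e.g.\ $x^3\equiv N\pmod 9$ is solvable only for $N\equiv 0,\pm1\pmod 9$, so divisibility of $N$ by a power of $3$ does not make $cc_3x_7^3\equiv N$ solvable at the required level, and no choice of $\gamma'(3)$ of the form you propose repairs this. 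You yourself observe that the Hensel step must be organised around $L_i'Q_i'$ rather than $x_7^3$; the construction of the solution has to be organised the same way. The treatment of the good primes $p\nmid 3c_1c_2c_3$ in your first paragraph is essentially right in spirit (and consistent with $S_1(p,a)=p^2$ for $p\nmid\mathfrak D_1$ in Section 4), but it is the smallest part of the theorem.
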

\begin{thm} \label{t:one3} 
Let $f$ be the cubic form in (\ref{e:one1}). The number $R(0;P)$ of solutions of $f({\bf x})=0$ in the box $|x_i|\leq P$ satisfies
\begin{equation*}
R(0;P)=\delta_0 P^4+P^4{\mathfrak S}_0J_0+O(P^{{4-1/16+\varepsilon}})
\end{equation*}
where $\delta_0>0$ is as described in the paragraph preceding the statement of theorem 1,
$${\mathfrak S}_0=\sum_{q=1}^{\infty}\sum_{\substack{a=1\\(a,q)=1}}
^{q}
q^{-7}S(q,a) \mbox{ with }S(q,a)=\sum_{{\bf z} \bmod q} e\left( \frac{a}{q}f({\bf z})\right)$$
and
$$J_0=\int_{-\infty}^{\infty}\left( \int_{[-1,1]^7}e(\gamma f({\bf \xi}))\,d{\bf
\xi} \right) d\gamma.$$
\end{thm}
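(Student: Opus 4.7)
The plan is to apply the Hardy--Littlewood circle method in the framework used in \cite{c7i}. Setting
$$T(\alpha)=\sum_{|x_i|\leq P}e(\alpha f({\bf x}))=T_1(\alpha)T_2(\alpha)T_3(\alpha),$$
where $T_1,T_2,T_3$ are the exponential sums associated to $L_1Q_1$, $L_2Q_2$ and $a_7x_7^3$ respectively, we have $R(0;P)=\int_0^1T(\alpha)\,d\alpha$, and we dissect the unit interval into major arcs $\mathfrak M$ and minor arcs $\mathfrak m$ with the same parameters as in \cite{c7i}.

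The major arc analysis proceeds exactly as in \cite{c7i}: after standard manipulations one approximates $T$ near $a/q$ by $q^{-7}S(q,a)$ times a smooth integral. Separating out the contribution of the four-dimensional rational linear subspaces listed before Theorem~\ref{t:one1}, which accounts for the $\delta_0P^4$ term, one arrives at
$$\int_{\mathfrak M}T(\alpha)\,d\alpha=\delta_0P^4+P^4\mathfrak S_0J_0+O(P^{4-1/16+\varepsilon}).$$
The only new ingredient required is a bound for the complete exponential sum $S(q,a)$, and hence for $\mathfrak S_0$, in the degenerate case $\Delta_1=0$, since the estimate of \cite{c7i} required $\Delta_1\neq 0$; producing this replacement is the content of the singular series section of the present paper.

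For the minor arcs we need $\int_{\mathfrak m}|T(\alpha)|\,d\alpha\ll P^{4-1/16+\varepsilon}$, and this is where the main new work lies. The strategy of \cite{c7i} is to use $|T|\leq|T_1||T_2||T_3|$, apply mean-value estimates to two of the factors and a pointwise Weyl-type estimate to the third. When $\Delta_1=0$ the pointwise bound for $T_1$ used in \cite{c7i} fails, so a substitute must be produced: one performs a nonsingular linear change of variables in $(x_1,x_2,x_3)$ which brings $L_1Q_1$ into a normal form exhibiting the vanishing of $\Delta_1={\bf a}_1{\bf M}_1{\bf a}_1^T$, and then Weyl-differences in a direction along which the doubly-differenced form remains nondegenerate, so that standard Gauss sum and linear exponential sum estimates yield a saving adequate for the minor arc bound; the analogous estimate is needed for $T_2$ when $\Delta_2=0$. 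This substitute pointwise bound is the principal obstacle of the proof. Once both the new minor-arc and singular series estimates are in hand, the asymptotic of Theorem~\ref{t:one3} follows by combining the major and minor arc contributions exactly as in \cite{c7i}.
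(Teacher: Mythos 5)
There is a genuine structural gap in your minor-arc strategy, and it also affects how the main term $\delta_0P^4$ arises. You propose to keep the factorization $T=T_1T_2T_3$ intact, put mean values on two factors and a new pointwise Weyl-differencing bound on $T_1$ (or $T_2$) to compensate for $\Delta_1=0$. This cannot work and is not what the paper does. The zero level set of $L_1Q_1$ contributes a non-oscillating term: writing $N_1=\mathrm{Card}\{(x_1,x_2,x_3)\in I^3:L_1Q_1=0\}$, one has $T_1(\alpha)=N_1+F_1(\alpha)$ with $N_1\gg P^2$ (the plane $L_1=0$ alone gives $\asymp P^2$ points in the symmetric box), so no pointwise bound for $T_1$ on the minor arcs can beat $O(P^2)$; likewise $\int_0^1|T_2|^2\,d\alpha\geq N_2^2\gg P^4$. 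Chasing the exponents, every Cauchy--Schwarz arrangement of sup and mean-value bounds on the \emph{undecomposed} factors lands at $P^{4+}$ or worse, short of $P^{4-1/16}$. The paper's essential device is the decomposition $F=(N_1+F_1)(N_2+F_2)F_3$ of (\ref{e:one11}): the pointwise Weyl estimate is applied only to the one-variable cubic sum $F_3$ (this step is inherited from \cite{c7i} and is unaffected by $\Delta_1=0$), while the new work for $\Delta_1=0$ is the mean value $\int_0^1|F_1|^2\,d\alpha\ll P^{3+\varepsilon}$ (Lemma \ref{l:three1}), proved not by Weyl differencing but by a purely arithmetic divisor-counting argument after reducing $L_1Q_1$ to the normal form $x_1'(x_1'x_3'+x_2'^2)$ of (\ref{e:two7})--(\ref{e:two8}).

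Two further consequences of this decomposition are missing from your outline. First, the term $\delta_0P^4$ is not extracted from the major arcs: it is the exact lattice-point count $N_1N_2=\delta_1P^4+O(P^{3+\varepsilon})$ coming from $\int_0^1N_1N_2F_3\,d\alpha=N_1N_2\chi(0)$, plus $\delta_2P^4$ from the cross term $\int_0^1N_1F_2F_3\,d\alpha$ when $\Delta_2\neq0$ (the spaces of type $(2',3')$). Second, the cross terms $N_2F_1F_3$ and (when $\Delta_2=0$) $N_1F_2F_3$ need a separate estimate, which in the degenerate case is Lemma \ref{l:three3}: the number of solutions of $x(xy+z^2)+Aw^3=N$ in the box is $\ll P^{11/6+\varepsilon}$, proved by combining Vaughan's theorem on integer points on elliptic curves with a bound for the number of solutions of $z^2\equiv m\pmod x$. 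You do correctly identify that a new singular series estimate is required; the paper supplies it by an explicit Gauss-sum evaluation giving $S_1(p^k,a)=p^{2k+\lfloor k/4\rfloor}$ for $p\nmid\mathfrak D_1$, which is just enough for absolute convergence of $\mathfrak S_0$. Without the $N_i+F_i$ separation and the two counting lemmas, your argument does not close.
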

The term $\delta_1 N^{4/3}\chi(N)$ in Theorem 1 and the term $\delta_0 P^4$ in Theorem 3 come from the contribution of the 4-dimensional rational linear spaces contained in the variety $f=0$. It was pointed out to me by the referee of \cite{c7i} that the asymptotic formula in Theorem 3 agrees with Manin's conjecture \cite{Manin} (which applies to the complement of these rational linear spaces in the variety $f=0$).\\
\indent In what follows, we take $P$ to be any large positive number when considering
the representations of zero; we take $P=[N^{1/3}]$ when considering the representations
of a large positive integer $N$. $I$ denotes the set of values $x_i$ are allowed
to take; ${\mathfrak B}=[0,1]^7$ in case $I=\{1,2,\ldots,P\}$
or $\{0,1,2,\ldots,P\}$; ${\mathfrak B}=[-1,1]^7$ in case
$I=\{-P,\ldots,-1,0,1,\ldots,P\}$; $N_1,N_2,\chi(\cdot)$ are as defined in
(\ref{e:one3}),(\ref{e:one4}),(\ref{e:one5}); $\delta=\delta_1$ or $\delta_0$ according as $N$ is a large positive integer or zero. Let $B=I^7$ and
\begin{equation}\label{e:one9}
F(\alpha)= \sum_{{\bf x}\in B} e(\alpha f({\bf x})).
\end{equation}
The number of representations of $N$  by
the cubic form $f$ with $x_i \in I$ is
\begin{equation}\label{e:one10}
R(N)=R(N;I)=\int_{0}^{1} F(\alpha)\,e(-N\alpha)\,d\alpha.
\end{equation}
Note that
$$F(\alpha)=(N_1+F_1(\alpha))(N_2+F_2(\alpha))F_3(\alpha)$$
\begin{equation}\label{e:one11} =N_1N_2F_3(\alpha)+N_1F_2(\alpha)F_3(\alpha)+N_2F_1(\alpha)F_3(\alpha)
+F_1(\alpha)F_2(\alpha)F_3(\alpha)
\end{equation}
where for $i=1,2$
$$
F_i(\alpha)=\sum_{\substack{x,y,z \in I \\ L_i(x,y,z)Q_i(x,y,z)\neq 0}} e(\alpha
L_i(x,y,z)Q_i(x,y,z))=\sum_{|n| \ll P^3}c_i(n)e(\alpha n)$$
with $c_i(0)=0$ and
\begin{equation} \label{e:one12}
c_i(n) = \mbox{Card}\{(x,y,z)\in I^3: L_i(x,y,z)Q_i(x,y,z)=n\} \mbox{ for }
n\neq 0;
\end{equation}
\begin{equation} \label{e:one13}
F_3(\alpha)=\sum_{x \in I} e(\alpha a_7x^3)=\sum_{n \in T} e(\alpha n).
\end{equation}
We need to estimate the contribution of each term on the right in (\ref{e:one11})
to the integral in (\ref{e:one10}). We note here that
\begin{equation} \label{e:one14}
\int_{0}^{1} N_1N_2F_3(\alpha)\,e(-N\alpha)\,d\alpha=N_1N_2 \chi (N)=\delta_1 (P^4+O(P^{3+\varepsilon}))\chi (N),
\end{equation}
\begin{equation} \label{e:one15}
\int_{0}^{1} N_1F_2(\alpha)F_3(\alpha)\,e(-N\alpha)\,d\alpha=N_1\sum_{n \in T} 
c_2(N-n)=N_1\sum_{x_7 \in I} 
c_2(N-a_7x_7^3)
\end{equation}
\begin{equation*}
\int_{0}^{1} N_2F_1(\alpha)F_3(\alpha)\,e(-N\alpha)\,d\alpha=N_2\sum_{n \in T} 
c_1(N-n)=N_2\sum_{x_7 \in I} 
c_1(N-a_7x_7^3)
\end{equation*}
\begin{equation*}
= N_2\sum_{x_7 \in I} \mathrm{Card}\{(x_1,x_2,x_3)\in I^3:L_1Q_1\neq 0,\, L_1Q_1=N-a_7x_7^3\}
\end{equation*}
\begin{equation} \label{e:one16} 
=N_2 \mathrm{Card}\{(x_1,x_2,x_3,x_7) \in I^4:L_1Q_1\neq 0,\, L_1Q_1+a_7x_7^3=N\}
\end{equation}
and, since $N_1,N_2 \ll P^2$, the trivial bounds on $F_1,F_2,F_3$ give
\begin{equation} \label{e:one17}
F(\alpha)-F_1(\alpha)F_2(\alpha)F_3(\alpha)\ll P^6.
\end{equation}
\Section{Auxiliary Transformations}
\label{auxiliary}
\indent As in \cite{c7i}, it will be useful to apply
some linear transformation to the terms $L_iQ_i$ and the equations of the form $L_iQ_i=n_i$, $i=1, 2$ while still maintaining the integrality of the coefficients. We collect here the result of applying these transformations and of various cases that arise and shall refer to them as needed.\\
\indent Consider the equation $L_2Q_2=n_2$. Substitute
\begin{equation} \label{e:two21}
x_4'=L_2=L_2(x_4,x_5,x_6)
\end{equation}
and note that, by renaming the variables if necessary, we can assume that
$a_4\neq0$. Then $x_4=(x_4'-a_5x_5-a_6x_6)/a_4$ and we have
$$a_4^2n_2=a_4^2x_4'Q_2%=a_4^2x_4'Q_2(x_4,x_5,x_6)
=x_4'Q_2(a_4x_4,a_4x_5,a_4x_6)=x_4'Q_2(x_4'-a_5x_5-a_6x_6,a_4x_5,a_4x_6).$$
Thus
\begin{equation} \label{e:two22}
a_4^2n_2=a_4^2x_4'Q_2=x_4'(A''x_5^2+B''x_5x_6+C''x_6^2+G''%(-2A_4a_5+a_4B_6)
x_4'x_5+F''%(-2A_4a_6+a_4B_5)
x_4'x_6+A_4x_4'^2)
\end{equation}
where
\begin{equation*}
A''=A_4a_5^2+A_5a_4^2-B_6a_4a_5,\mbox{ }
B''=2A_4a_5a_6+B_4a_4^2-B_5a_4a_5-B_6a_4a_6,
\end{equation*}
\begin{equation*}
C''=A_4a_6^2+A_6a_4^2-B_5a_4a_6, \mbox{ } F''=B_5a_4-2A_4a_6, \mbox{ }
G''=B_6a_4-2A_4a_5.
\end{equation*}
\indent By a tedious but straightforward calculation, 
$B''^2-4A''C''=a_4^2{\bf a}_2{\bf M}_2{\bf a}_2^T=a_4^2\Delta_2$. If $\Delta_2 \neq 0,$ then $L_2Q_2$ is nondegenerate. If $\Delta_2=0$, then $L_2Q_2$ is nondegenerate iff either $A''(2A''F''-B''G'')\neq 0$ or $C''(2C''G''-B''F'')\neq 0$.\\
\indent If $\Delta_2 \neq 0$, $A''\neq 0$, (\ref{e:two22}) becomes
\begin{equation} \label{e:two23}
x_4'(a_4^2\Delta_2 x_5'^2-x_6'^2)=4A'' a_4^4\Delta_2 n_2- D''x_4'^3=4A''
a_4^4\Delta_2 x_2'Q_2- D''x_4'^3
\end{equation}
where $$x_5' = 2A''x_5+B''x_6+G''x_4',\mbox{ }
x_6'= a_4^2\Delta_2 x_6+(B''G''-2A''F'')x_4',$$ and
$$D''=a_4^2\Delta_2(4A''A_4-G''^2)+(2A''F''-B''G'')^2.$$
Also, in this case
$$4A'' a_4^4\Delta_2(L_2Q_2+a_7x_7^3)=L_2(a_4^2\Delta_2 x_5'^2-x_6'^2)+D''L_2^3+4A'' a_4^4\Delta_2a_7x_7^3$$
so that in case $\Delta_2$ is a square and $D''/4A'' a_4^4\Delta_2a_7$ is a nonzero rational cube, say $\Delta_2=d^2$, $D''=E''d_1^3$, $4A'' a_4^4\Delta_2a_7=E''d_2^3$, $E''\neq 0$, we have 
\begin{equation} \label{e:two24}
4A'' a_4^4\Delta_2(L_2Q_2+a_7x_7^3)=L_2x_5''x_6''+E''(d_1^3L_2^3+d_2^3x_7^3)
\end{equation}
where $x_5''=a_4dx_5'+x_6'$ and $x_6''=a_4dx_5'-x_6'$.\\
\indent If $\Delta_2 \neq 0$, $A''=0$ %then we must have $B' \neq 0$ as B'^2-4A'C' \neq 0.$
but $C'' \neq 0$, we can again complete the square to get
\begin{equation} \label{e:two25}
x_4'(a_4^2\Delta_2x_6'^2- x_5'^2)=4a_4^4\Delta_2C''n_2-D''x_1''^3=4a_4^4\Delta_2C''x_2'Q_2- D''x_4'^3
\end{equation}
where $$x_5'= a_4^2\Delta_2 x_5+(B''F''-2C''G'')x_4', \mbox{ }x_6' = 2C''x_6+B''x_5+F''x_4',$$ and
$$D''=a_4^2\Delta_4(4C''A_4-F''^2)+(2C''G''-B''F'')^2.$$
Also, in this case
$$4C'' a_4^4\Delta_2(L_2Q_2+a_7x_7^3)=L_2(a_4^2\Delta_2 x_6'^2-x_5'^2)+D''L_2^3+4C'' a_4^4\Delta_2a_7x_7^3$$
so that in case $\Delta_2$ is a square and $D''/4C'' a_4^4\Delta_2a_7$ is a nonzero rational cube, say $\Delta_2=d^2$, $D''=E''d_1^3$, $4C'' a_4^4\Delta_2a_7=E''d_2^3$, $E''\neq 0$, we have 
\begin{equation} \label{e:two26}
4C'' a_4^4\Delta_2(L_2Q_2+a_7x_7^3)=L_2x_5''x_6''+E''(d_1^3L_2^3+d_2^3x_7^3)
\end{equation}
where $x_5''=a_4dx_6'+x_5'$ and $x_6''=a_4dx_6'-x_5'$.\\
\indent If $A''=C''=0$, $B''\neq 0$, then multiplying (\ref{e:two22}) by $B''$ and adding a suitable multiple of $x_4'^3$ to both sides we get,
\begin{equation} \label{e:two27}
x_4'x_5'x_6'=B''a_4^2n_2 -D''x_4'^3=B''a_4^2x_4'Q_2 -D''x_4'^3
\end{equation}
where $$x_5'=B''x_5+F''x_4',\mbox{ } x_6'=B''x_6+G''x_4',\mbox{ } D''=B''A_4-F''G''.$$
Also, in this case,
$$B''a_4^2(L_2Q_2+a_7x_7^3)=L_2x_5'x_6'+D''L_2^3+B''a_4^2a_7x_7^3,$$
$a_4^2\Delta_2=B''^2$ is a square and in case $D''/B''a_4^2a_7$ is a nonzero rational cube, say $D''=E''d_1^3$, $B''a_4^2a_7=E''d_2^3$, $E''\neq 0$, writing $x_5''$ for $x_5'$ and $x_6''$ for $x_6'$, we have 
\begin{equation} \label{e:two28}
B''a_4^2(L_2Q_2+a_7x_7^3)=L_2x_5''x_6''+E''(d_1^3L_2^3+d_2^3x_7^3).
\end{equation}
\indent If $\Delta_2=0$, $A''(2A''F''-B''G'')\neq 0$, then
\begin{equation*}
4A''a_4^2Q_2(x_4,x_5,x_6)=x_4'^2(4A''A_4-G''^2)+(4A''F''-2B''G'')x_4'x_6+x_5'^2=x_4'x_6'+x_5'^2,
\end{equation*}
\begin{equation} \label{e:two29}
4A''a_4^2x_4'Q_2(x_4,x_5,x_6)=4A''a_4^2n_2%=x_4'(x_4'^2(4A''A_4-G''^2)+(4A''F''-2B''G'')x_4'x_6+x_5'^2)
=x_4'(x_4'x_6'+x_5'^2)
\end{equation}
where $$x_5'=2A''x_5+B''x_6+G''x_4', \mbox{ }x_6'=(4A''A_4-G''^2)x_4'+(4A''F''-2B''G'')x_6.$$
\indent If $\Delta_2=0$, $C''(2C''G''-B''F'')\neq 0$, then
\begin{equation*}
4C''a_4^2Q_2(x_4,x_5,x_6)=x_4'^2(4C''A_4-F''^2)+(4C''G''-2B''F'')x_4'x_6+x_5'^2=x_4'x_6'+x_5'^2,
\end{equation*}
\begin{equation} \label{e:two210}
4C''a_4^2x_2'Q_2(x_4,x_5,x_6)=4C''a_4^2n_2%=x_4'(x_4'^2(4C''A_4-F''^2)+(4C''G''-2B''F'')x_4'x_5+x_5'^2)
=x_4'(x_4'x_6'+x_5'^2)
\end{equation}
where $$x_5'=2C''x_6+B''x_5+F''x_4', \mbox{ } x_6'=(4C''A_4-F''^2)x_4'+(4C''G''-2B''F'')x_5.$$
\indent Now consider the equation $L_1Q_1=n_1$. Substitute
\begin{equation} \label{e:two1}
x_1'=L_1=L_1(x_1,x_2,x_3)=a_1x_1+a_2x_2+a_3x_3.
\end{equation}
By renaming the variables if necessary, we can assume that
$a_1\neq0$. Then %$x_1=(x_1'-a_2x_2-a_3x_3)/a_1$ and we have
$$a_1^2n_1=a_1^2x_1'Q_1%=a_1^2x_1'Q_1(x_1, x_2,x_3)
=x_1'Q_1(a_1x_1,a_1x_2,a_1x_3)=x_1'Q_1(x_1'-a_2x_2-a_3x_3,a_1x_2,a_1x_3)$$
\begin{equation} \label{e:two2}
=x_1'(A'x_2^2+B'x_2x_3+C'x_3^2+G'%(-2A_1a_2+a_1B_3)
x_1'x_2+F'%(-2A_1a_3+a_1B_2)
x_1'x_3+A_1x_1'^2)
\end{equation}
where
\begin{equation*}
A'=A_1a_2^2+A_2a_1^2-B_3a_1a_2,\mbox{ }
B'=2A_1a_2a_3+B_1a_1^2-B_2a_1a_2-B_3a_1a_3,
\end{equation*}
\begin{equation}  \label{e:two10}
C'=A_1a_3^2+A_3a_1^2-B_2a_1a_3, \mbox{ } F'=B_2a_1-2A_1a_3, \mbox{ }
G'=B_3a_1-2A_1a_2.
\end{equation}
\indent We are assuming that $B'^2-4A'C'=a_1^2{\bf a}_1{\bf M}_1{\bf a}_1^T=a_1^2\Delta_1=0$. Note that if $A'=C'=0$ then $B'=0$ as well and $L_1Q_1$ is degenerate. In fact, in case $B'^2-4A'C'=0$, $L_1Q_1$ is nondegenerate iff either $A'(2A'F'-B'G')\neq 0$ or $C'(2C'G'-B'F')\neq 0.$\\
\indent If $\Delta_1=0$, $A'(2A'F'-B'G')\neq 0$, then
\begin{equation*}
4A'a_1^2Q_1(x_1,x_2,x_3)=x_1'^2(4A'A_1-G'^2)+(4A'F'-2B'G')x_1'x_3+x_2'^2=x_1'x_3'+x_2'^2,
\end{equation*}
\begin{equation} \label{e:two7}
4A'a_1^2x_1'Q_1(x_1,x_2,x_3)=4A'a_1^2n_1%=x_1'(x_1'^2(4A'A_1-G'^2)+(4A'F'-2B'G')x_1'x_3+x_2'^2)
=x_1'(x_1'x_3'+x_2'^2)
\end{equation}
where $$x_2'=2A'x_2+B'x_3+G'x_1', \mbox{ }x_3'=(4A'A_1-G'^2)x_1'+(4A'F'-2B'G')x_3.$$
\indent If $\Delta_1=0$, $C'(2C'G'-B'F')\neq 0$, then
\begin{equation*}
4C'a_1^2Q_1(x_1,x_2,x_3)=x_1'^2(4C'A_1-F'^2)+(4C'G'-2B'F')x_1'x_3+x_2'^2=x_1'x_3'+x_2'^2,
\end{equation*}
\begin{equation} \label{e:two8}
4C'a_1^2x_1'Q_1(x_1,x_2,x_3)=4C'a_1^2n_1%=x_1'(x_1'^2(4C'A_1-F'^2)+(4C'G'-2B'F')x_1'x_2+x_2'^2)
=x_1'(x_1'x_3'+x_2'^2)
\end{equation}
where $$x_2'=2C'x_3+B'x_2+F'x_1', \mbox{ } x_3'=(4C'A_1-F'^2)x_1'+(4C'G'-2B'F')x_2.$$
\Section{The Treatment of the Minor Arcs}
\label{minorarcs}
\noindent Let $\delta$ be a sufficiently small positive real number to be specified 
later. For $1 \leq q \leq P^{\delta},\; 1 \leq a \leq q,\; (a,q)=1,$ define the major arcs $\mathfrak{M}(q,a)$ by
\begin{equation} \label{e:three1}
\mathfrak{M}(q,a)=\left\{\alpha:\left| \alpha
- \frac{a}{q}\right| \leq P^{\delta-3}\right\}
\end{equation}
and let $\mathfrak{M}$ be the union of the $\mathfrak{M}(q,a)$. The set $\mathfrak{m}=(P^{\delta - 3},
1+P^{\delta - 3}]\backslash \mathfrak{M}$ forms the minor arcs. The following two lemmas give the estimate for the minor arc contribution from the last term in (\ref{e:one11}).
\begin{lem} \label{l:three1} For $i=1,2$ we have
\begin{equation} \label{e:three2}
\int_{0}^{1} \left| F_i(\alpha)\right|^2 d\alpha \ll P^{3+\varepsilon}.
\end{equation}
\end{lem}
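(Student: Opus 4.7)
The plan is to convert the $L^2$ estimate into a coincidence count via Parseval, then exploit the factorization that appears after the change of variables. Since
\[
\int_0^1 |F_i(\alpha)|^2\,d\alpha = \sum_{n\neq 0} c_i(n)^2
\]
by orthogonality, the lemma amounts to counting ordered pairs of triples in $I^3$ on which $L_iQ_i$ takes the same nonzero value. The case $\Delta_i\neq 0$ proceeds exactly as in \cite{c7i}, so the essentially new task is to treat $\Delta_i=0$. By symmetry I outline the argument for $i=1$; the case $i=2$ with $\Delta_2=0$ is identical after replacing (\ref{e:two7})--(\ref{e:two8}) by (\ref{e:two29})--(\ref{e:two210}).

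Since $L_1Q_1$ is nondegenerate with $\Delta_1=0$, either $A'(2A'F'-B'G')\neq 0$ or $C'(2C'G'-B'F')\neq 0$. I treat the first subcase; the second is analogous via (\ref{e:two8}). The integer linear change of variables from (\ref{e:two7}) sends $(x_1,x_2,x_3)\mapsto(x_1',x_2',x_3')$ injectively, maps $I^3$ into a cube $[-CP,CP]^3$ for some constant $C$ depending only on $f$, and converts $L_1Q_1=n$ into $x_1'(x_1'x_3'+x_2'^2)=4A'a_1^2 n$. Defining
\[
\tilde c(m)=\#\{(x,y,z)\in[-CP,CP]^3:\ x(xz+y^2)=m\},
\]
one has $c_1(n)\le\tilde c(4A'a_1^2 n)$, so it suffices to prove $\sum_{m\neq 0}\tilde c(m)^2\ll P^{3+\varepsilon}$.

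The core of the argument is a divisor-stratified Cauchy--Schwarz. Decompose $\tilde c(m)=\sum_{x\mid m,\,0<|x|\le CP}\tilde c_x(m)$, where $\tilde c_x(m)$ counts $(y,z)\in[-CP,CP]^2$ with $xz+y^2=m/x$. For $m\neq 0$ the outer sum has at most $2d(|m|)\ll P^\varepsilon$ nonzero terms, so $\tilde c(m)^2\ll P^\varepsilon\sum_x\tilde c_x(m)^2$, and interchanging summation reduces matters to bounding, for each fixed $0<|x|\le CP$, the quantity
\[
\sum_m\tilde c_x(m)^2=\#\{(y,z,y',z')\in[-CP,CP]^4:\ x(z-z')=y'^2-y^2\}.
\]
For each pair $(y,y')$ with $x\mid(y'-y)(y'+y)$ the difference $z-z'$ is pinned down, giving $\ll P$ choices of $(z,z')$, while the number of such $(y,y')$ in the box is $\ll P^2|x|^{\varepsilon-1}+P|x|^\varepsilon$ by the standard bound on the number of square roots modulo $|x|$. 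Hence $\sum_m\tilde c_x(m)^2\ll P^3|x|^{\varepsilon-1}+P^2|x|^\varepsilon$, and summation over $x$ yields the required bound $P^{3+\varepsilon}$.

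The main obstacle is that the sharpest direct pointwise bound $c_1(n)\ll P^{1+\varepsilon}$, obtained by fixing $x_1'$ as a divisor of $4A'a_1^2 n$ and noting that $x_1'x_3'+x_2'^2=m/x_1'$ admits $\ll P$ solutions $(x_2',x_3')$ in the box, only yields $\sum_n c_1(n)^2\ll P^{4+\varepsilon}$, which is a factor of $P$ short. The rescue lies in the divisor stratification: once Cauchy--Schwarz is applied inside the coincidence sum, pairs $((x,y,z),(x',y',z'))$ are grouped by the common value of the factor $x$, and the resulting inner sum is an additive-energy count for the conic $xz+y^2$ at fixed $x$, which is precisely where the needed power saving appears.
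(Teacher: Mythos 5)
Your proposal is correct and follows essentially the same route as the paper: Parseval, the linear change of variables reducing $L_1Q_1=n$ to $x(xz+y^2)=m$, a divisor-stratified Cauchy--Schwarz, and then a count of the coincidence set $x(z-z')=y'^2-y^2$. The only (minor) variation is in that last count — the paper fixes $(z,z')$ and bounds the number of $(x,y,y')$ by divisor estimates, whereas you fix $x$ and invoke the square-root count modulo $|x|$ — but both yield the same $O(P^{3+\varepsilon})$ bound.
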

\begin{proof} We considered the case $\Delta_i \neq 0$ in \cite{c7i}; we consider the case $\Delta_i=0$ here. We have 
\begin{equation} \label{e:three3}
\int_{0}^{1}|F_1(\alpha)|^2 d\alpha
=\sum_{|n| \ll P^3}c_1(n)^2
=\sum_{|n| \ll P^3,\, n\neq 0}c_1(n)^2
\end{equation}
with $c_1(n)$ as defined in (\ref{e:one12}), $c_1(0)=0$. For $n\neq 0, x\neq 0$, let $$R(n,x):=\mbox{Card} \{(y,z): |y|,|z| \leq P, x(xy+z^2)=n\}.$$
Then, by (\ref{e:two7}), (\ref{e:two8}) and Cauchy's inequality
$$\sum_{\substack{|n|\ll P^3\\ n \neq 0}}c_1(n)^2 \leq \sum_{\substack{|n|\ll P^3\\ n \neq 0}} \left(\sum_{\substack{|x|\leq P\\ x|n}} R(n,x)\right)^2 \leq \sum_{\substack{|n|\ll P^3\\ n\neq 0}} \sum_{x|n}1 \sum_{\substack{|x|\leq P\\ x|n}} R(n,x)^2$$
$$\ll P^{\varepsilon}\sum_{\substack{|n|\ll P^3\\ n\neq 0}} \sum_{\substack{|x|\leq P\\ x|n}} R(n,x)^2
\leq %P^{\varepsilon}\sum_{m\neq 0}\sum_{|x|\leq P} R(mx,x)^2=
P^{\varepsilon}\sum_{\substack{|x|\leq P\\ x\neq 0}} \sum_{m\neq 0} R(mx,x)^2$$
$$=P^{\varepsilon}\mbox{Card} \{(x,y,z,y',z'): |x|,|y|,|z|,|y'|,|z'| \leq P,\, x\neq 0,\, x(xy+z^2)=x(xy'+z'^2)\}$$
$$=P^{\varepsilon}\mbox{Card} \{(x,y,z,y',z'): |x|,|y|,|z|,|y'|,|z'| \leq P,\, x\neq 0,\, x(y-y')=z'^2-z^2\}.$$
%$$\ll P^{3+\varepsilon}+P^{\varepsilon}\mbox{Card} \{(x,y,z,y',z'): |x|,|y|,|z|,|y'|,|z'| \leq P,\, x\neq 0,\,z\neq z',\, x(y-y')=z'^2-z^2\}.$$
In the last expression, for fixed $(z,z')$ the number of choices for $(x,y,y')$ is $O(P^2)$ if $z=z'$ and $O(P^{1+\varepsilon})$ if $z\neq z'.$ Hence, the inequality (\ref{e:three2}) holds.
\qedhere
\end{proof}
\begin{lem} \label{l:three2}
We have
$$\int_{\mathfrak{m}}F_1(\alpha)F_2(\alpha)F_3(\alpha)\,e(-N\alpha)\,d\alpha \ll P^{4+\varepsilon-(\delta/4)}.$$
\end{lem}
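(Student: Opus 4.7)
The plan is to combine a pointwise Weyl-type bound on the cubic exponential sum $F_3(\alpha)$ over the minor arcs with the $L^2$ mean-value estimate for $F_1, F_2$ supplied by Lemma~\ref{l:three1}. The factorization $F_3(\alpha)F_1(\alpha)F_2(\alpha)$ is tailor-made for this: $F_3$ depends only on the single variable $x_7$ and the coefficient $a_7$, so it is a genuine cubic Weyl sum amenable to Weyl's inequality, while $F_1$ and $F_2$ are handled in mean square by the lemma just proved.

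First I would bound $F_3$ pointwise on $\mathfrak{m}$. By Dirichlet's approximation theorem applied to $a_7\alpha$, there exist coprime integers $a,q$ with $1\le q\le a_7^{-1}P^{3-\delta}$ and $|qa_7\alpha-a|\le a_7P^{\delta-3}$. If $\alpha$ lies in $\mathfrak{m}$ then one checks (absorbing the fixed constant $a_7$ into the implicit constants) that necessarily $q\gg P^{\delta}$, for otherwise $\alpha$ would sit in one of the major arcs $\mathfrak{M}(q',a')$. Weyl's inequality for the cubic sum $F_3(\alpha)=\sum_{x\in I}e(a_7\alpha x^3)$ then gives
\begin{equation*}
F_3(\alpha)\ll P^{1+\varepsilon}\bigl(q^{-1}+P^{-1}+qP^{-3}\bigr)^{1/4}\ll P^{1-\delta/4+\varepsilon}
\end{equation*}
uniformly for $\alpha\in\mathfrak{m}$.

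Second, I would pull out this supremum and apply Cauchy--Schwarz together with Lemma~\ref{l:three1}:
\begin{equation*}
\left|\int_{\mathfrak{m}}F_1(\alpha)F_2(\alpha)F_3(\alpha)\,e(-N\alpha)\,d\alpha\right|
\le \sup_{\alpha\in\mathfrak{m}}|F_3(\alpha)|\int_0^1|F_1(\alpha)F_2(\alpha)|\,d\alpha,
\end{equation*}
\begin{equation*}
\int_0^1|F_1 F_2|\,d\alpha \le \left(\int_0^1|F_1|^2\,d\alpha\right)^{1/2}\left(\int_0^1|F_2|^2\,d\alpha\right)^{1/2}\ll P^{3+\varepsilon}.
\end{equation*}
Multiplying the two bounds yields $P^{1-\delta/4+\varepsilon}\cdot P^{3+\varepsilon}\ll P^{4+\varepsilon-\delta/4}$, which is the claim.

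The substantive input is Lemma~\ref{l:three1}, which has already been established (and is precisely what lets the $L^2$ mean value of $F_i$ remain $P^{3+\varepsilon}$ even when $\Delta_i=0$). The only other delicate point is the bookkeeping around the constant $a_7$ when transferring Dirichlet approximations between $\alpha$ and $a_7\alpha$, so that one is guaranteed to land outside the major arcs and obtain a denominator $q\gg P^{\delta}$; since $a_7$ is a fixed nonzero integer this has no effect on the exponent and is a routine adjustment, so no real obstacle arises.
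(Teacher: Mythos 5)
Your argument is correct and is essentially the paper's proof: a Dirichlet approximation on the minor arcs forcing $q>P^{\delta}$, Weyl's inequality giving $F_3(\alpha)\ll P^{1+\varepsilon-\delta/4}$, and then Cauchy--Schwarz combined with Lemma \ref{l:three1} to bound $\int_0^1|F_1F_2|\,d\alpha\ll P^{3+\varepsilon}$. The only cosmetic difference is that you apply Dirichlet's theorem to $a_7\alpha$ rather than to $\alpha$, which, as you note, only affects implicit constants.
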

\begin{proof}
For $\alpha \in \mathfrak{m}$,
by Dirichlet's theorem on Diophantine Approximation, there exist $a,q$ with
$(a,q) =1, q\leq P^{3-\delta}$ and $\left| \alpha
- \frac{a}{q}\right| \leq q^{-1}P^{\delta-3} < q^{-2}.$ Since
$\alpha \in \mathfrak{m}\, \subseteq (P^{\delta - 3},
1-P^{\delta - 3})$, we must have $1\leq a < q$ and $q
>P^{\delta}$ (otherwise $\alpha$ would be in $\mathfrak{M}(q,a)$). By Weyl's
inequality (See \cite{VaughanBook}),
$$F_3(\alpha)\ll P^{1+\varepsilon-(\delta/4)}.$$
This, with lemma \ref{l:three1} and Cauchy's inequality, gives
$$\int_{\mathfrak{m}}F_1(\alpha)F_2(\alpha)F_3(\alpha)\,e(-N\alpha)\,d\alpha
\leq \left( \sup_{\alpha\in\mathfrak{m}}|F_3(\alpha)|
\right) \left( \int_{\mathfrak{m}} |F_1(\alpha)||F_2(\alpha)| d\alpha \right)$$
$$\leq \left( \sup_{\alpha\in\mathfrak{m}}|F_3(\alpha)| \right)
\left( \int_{0}^{1} |F_1(\alpha)|^2 d\alpha \right)^{\frac{1}{2}}
\left( \int_{0}^{1} |F_2(\alpha)|^2 d\alpha \right)^{\frac{1}{2}}
\ll P^{4+\varepsilon-\frac{\delta}{4}}. \qedhere$$
\end{proof}
\indent We need the following two lemmas to estimate the contribution from the second and third terms in (\ref{e:one11}).
\begin{lem} \label{l:three0}
Let $k\geq 2, q\in \mathbb{N}$. For any $m\in \mathbb{Z}$, the number of solutions of the congruence
\begin{equation} \label{e:three0}
x^k \equiv m \pmod q
\end{equation}
is $\ll q^{1-\frac{1}{k}}.$
\end{lem}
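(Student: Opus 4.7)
The plan is to reduce via the Chinese Remainder Theorem to prime power moduli. Writing $q = \prod_p p^{e_p}$, if $N(q,m)$ denotes the number of solutions of $x^k \equiv m \pmod q$, then $N(q,m) = \prod_p N(p^{e_p}, m)$. It therefore suffices to establish, for every prime power $p^e$, a bound of the form $N(p^e, m) \leq C_p \cdot (p^e)^{1-1/k}$ with constants $C_p$ that satisfy $C_p = 1$ for all but finitely many primes, so that the total constant $\prod_p C_p$ depends only on $k$.

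For each prime power $p^e$, I would stratify the count by $a := v_p(x)$. Setting $b := v_p(m)$ (with $b=e$ when $p^e \mid m$), solutions can only occur in one of two regimes: either $b \geq e$ and $ka \geq e$, or $b < e$ and $ka = b$. In the first regime, one needs $v_p(x) \geq \lceil e/k\rceil$, giving $p^{\,e-\lceil e/k\rceil}\leq p^{e(1-1/k)}$ possibilities. In the second regime, the condition forces $k \mid b$ and $a = b/k$, and after writing $x = p^a y$ with $(y,p)=1$ the equation becomes $y^k \equiv m p^{-b}\pmod{p^{e-b}}$ with $y$ counted modulo $p^{e-a}$, giving $p^{b-a}$ times the number of $k$-th root solutions for a unit modulus $p^{e-b}$.

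The heart of the matter is the classical bound on $k$-th root counts in $(\mathbb{Z}/p^{e-b}\mathbb{Z})^\ast$. For odd primes $p$ coprime to $k$, cyclicity of this group together with Hensel's lemma show the count equals $\gcd(k,p-1)\leq k$. Combined with $p^{b-a} = p^{b(1-1/k)}$ and $b\leq e-1$, this yields $N(p^e,m) \leq k \cdot p^{(e-1)(1-1/k)}$, which is at most $(p^e)^{1-1/k}$ as soon as $p^{1-1/k}\geq k$, i.e.\ for every prime $p$ exceeding a threshold $P_0(k)$ depending only on $k$. For the finitely many primes $p\leq P_0(k)$, including $p=2$ and primes dividing $k$ where cyclicity or the straightforward Hensel lift fails, the same stratification still gives $N(p^e,m) \ll_k (p^e)^{1-1/k}$. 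Multiplying the bounds over all primes dividing $q$ yields $N(q,m)\ll_k q^{1-1/k}$. The only real technical nuisance is bookkeeping the constants at the exceptional small primes and at primes dividing $k$, where the number of $k$-th roots of a unit can inflate to $k\cdot p^{v_p(k)}$; since there are only finitely many such primes, these inflations are absorbed into the implicit constant.
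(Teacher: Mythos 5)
Your proof is correct, but it follows a genuinely different route from the paper's. You localize at each prime via the Chinese Remainder Theorem and then stratify the solutions of $x^k\equiv m\pmod{p^e}$ by the valuation $a=v_p(x)$, reducing to counting $k$-th roots of a unit modulo $p^{e-b}$ and using the structure of $(\mathbb{Z}/p^n\mathbb{Z})^{*}$; the exponent $1-1/k$ emerges from $p^{b-a}=p^{b(1-1/k)}$ together with the trivial count $p^{e-\lceil e/k\rceil}$ when $p^e\mid m$. The paper instead works globally: it quotes the classical bound $\ll q^{\varepsilon}$ for the case $(m,q)=1$, and in the general case decomposes $D=(m,q)$ as $d_1d_2^2\cdots d_k^k$ with $d_1,\dots,d_{k-1}$ squarefree and pairwise coprime, observes $d_1\cdots d_k\mid x$, divides through by $D$ to reach a unit situation, and bounds the resulting multiplicity by $D/(d_1\cdots d_k)=d_2d_3^2\cdots d_k^{k-1}\le D^{1-1/k}$. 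The trade-off is that the paper's argument is shorter but leans on the ``well known'' $q^{\varepsilon}$ bound for the coprime case (which is itself proved by exactly the local analysis you carry out), while your version is self-contained at the cost of the bookkeeping you rightly flag at $p=2$ and at primes dividing $k$; your accounting there is adequate, since at odd primes the root count is at most $\gcd(k,p^{n-1}(p-1))\le k$ and at $p=2$ at most $2k$, so the product of the exceptional constants is bounded in terms of $k$ alone.
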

\begin{proof}
It is well known that the congruence (\ref{e:three0}) has $\ll q^{\varepsilon}$ solutions if $(m,q)=1$. If $(m,q)=D$, write $D=d_1d_2^2\ldots d_k^k$ with $d_1, d_2,\ldots , d_{k-1}$ square-free and relatively prime in pairs. Then $d_1d_2\ldots d_k|x$ for any solutions $x$ of (\ref{e:three0}). Let
$$x_0=x/d_1d_2\ldots d_k, \mbox{ } q_0=q/D, \mbox{ } m_0=m/D.$$
Then $(m_0,q_0)=1$ and (\ref{e:three0}) becomes
$$d_1^{k-1}d_2^{k-2}\ldots d_{k-1} x_0^k \equiv m_0 \pmod{q_0}$$
which has $\ll q_0^{\varepsilon}=(q/D)^{\varepsilon}$ solutions with $1\leq x_0\leq q_0=q/D.$ Thus the number of solutions of (\ref{e:three0}) with $1\leq x\leq q$ is $\ll (q/D)^{\varepsilon}D/d_1d_2\ldots d_k=(q/D)^{\varepsilon}d_2d_3^2\ldots d_k^{k-1}\leq (q/D)^{\varepsilon}D^{1-\frac{1}{k}}=q^{\varepsilon}D^{1-\frac{1}{k}-\varepsilon}\leq q^{\varepsilon}q^{1-\frac{1}{k}-\varepsilon}=q^{1-\frac{1}{k}}.$
\end{proof}
\begin{lem} \label{l:three3}
For any fixed non-zero integer $A$,
$$\mathrm{Card}\{(x,y,z,w):|x|,|y|,|z|,|w|\leq P,\,x(xy+z^2)\neq 0,\, x(xy+z^2)+Aw^3=N\}\ll P^{\frac{11}{6}+\varepsilon}.$$
\end{lem}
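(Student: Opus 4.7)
The plan is to fix $w$ first and bound the resulting three-variable count uniformly. Writing $M_w := N - A w^3$, the non-vanishing condition $x(xy+z^2) \ne 0$ forces $M_w \ne 0$, so the count in the lemma equals $\sum_{|w|\leq P,\, M_w\ne 0} r(M_w)$, where $r(M) := \#\{(x,y,z): |x|,|y|,|z|\leq P,\; x(xy+z^2) = M\}$. I would aim to prove $r(M)\ll P^{1/2+\varepsilon}$ uniformly for $M\ne 0$ with $|M|\ll P^3$; summing over $|w|\leq P$ then yields a total bound of $\ll P^{3/2+\varepsilon}$, which is already sharper than the claimed $P^{11/6+\varepsilon}$.

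To bound $r(M)$ for fixed $M\ne 0$, I would parametrize by a divisor $x\mid M$ with $|x|\leq P$; setting $n = M/x$, the remaining conditions become $z\in[-P,P]$, $z^2\equiv n \pmod{|x|}$ (so that $y = (n-z^2)/x$ is an integer), and $|z^2-n|\leq |x|P$ (so that $|y|\leq P$). Lemma \ref{l:three0} with $k=2$ bounds the number of $z\bmod |x|$ satisfying the congruence by $\ll |x|^{1/2}$. The size constraint then confines $z$ to a union of at most two intervals of total length $\ll \sqrt{|x|P}$: if $|n|\leq |x|P$ then $z^2\leq 2|x|P$ directly; if $n > |x|P$ the admissible $z$ lie near $\pm\sqrt{n}$ in two intervals each of length $\ll |x|P/\sqrt{n}\leq \sqrt{|x|P}$; and the case $n<-|x|P$ is vacuous since $z^2\geq 0$.

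Combining the interval length with the residue density yields at most $\sqrt{|x|P}\cdot|x|^{-1/2}+|x|^{1/2}\ll P^{1/2}$ admissible $z$ per divisor $x$ of $M$ (using $|x|\leq P$). Summing over the $\ll |M|^\varepsilon\leq P^\varepsilon$ divisors of $M$ in $[-P,P]$ gives $r(M)\ll P^{1/2+\varepsilon}$, and finally summing over $|w|\leq P$ gives the claimed bound. The only delicate step is the size analysis producing the total length $\ll\sqrt{|x|P}$; once this is in place, the rest is a routine combination of the standard divisor bound with Lemma \ref{l:three0}.
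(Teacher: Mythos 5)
Your proof is correct, and it takes a genuinely different route from the paper's. The paper splits on the size of $x$: for $0<|x|\leq P^{\theta}$ it fixes $(x,y)$ and invokes Theorem 1 of \cite{VaughanEll} to bound the number of integer points $(z,w)$ on the curve $xz^2=-Aw^3+N-x^2y$ by $\ll P^{1/2}$, while for $P^{\theta}<|x|\leq P$ it fixes $w$, uses the divisor bound for $x$ and Lemma \ref{l:three0} for $z$ over the \emph{full} range $|z|\leq P$, obtaining $\ll P/\sqrt{|x|}$ choices of $z$; optimizing at $\theta=1/3$ gives $P^{11/6+\varepsilon}$. You instead fix $w$ throughout and make essential use of the constraint $|y|\leq P$, which the paper discards: it confines $z$ to at most two intervals of total length $\ll\sqrt{|x|P}$ (your three-way case analysis on $n$ versus $\pm|x|P$ is right), so that combined with the $\ll|x|^{1/2}$ admissible residue classes mod $|x|$ you get $\ll\sqrt{P}+|x|^{1/2}\ll\sqrt{P}$ values of $z$ per divisor, \emph{uniformly} in $x$. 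This removes the need both for the case split and for the elliptic-curve input, and it yields the stronger bound $P^{3/2+\varepsilon}$ (which is sharp up to $P^{\varepsilon}$, as the divisor $x=1$ with $M$ small already contributes $\gg\sqrt{P}$ triples per $w$); of course $P^{3/2+\varepsilon}\ll P^{11/6+\varepsilon}$, so the lemma as stated follows. The paper's approach buys nothing extra here beyond illustrating the elliptic-curve counting result; your argument is both more elementary and quantitatively better, and would in fact slightly improve the error terms downstream in (\ref{e:three5}) and (\ref{e:three18}).
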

\begin{proof} Let $\theta$ be a parameter, $0<\theta<1$, to be chosen later. We estimate the number of those solutions with $0<|x|\leq P^{\theta}$ and those with $P^{\theta}<|x|\leq P$ separately.\\
\indent By Theorem 1 of \cite{VaughanEll}, for a fixed pair $(x,y)$ the equation $x(xy+z^2)+Aw^3=N$ (which is equivalent to $xz^2=-Aw^3+N-x^2y$) has $\ll P^{\frac{1}{2}}$ solutions in $(z,w)$ with $|z|\leq P$. Thus there are $\ll P^{\frac{3}{2}+\theta}$ solutions of $x(xy+z^2)+Aw^3=N$ with $0<|x|\leq P^{\theta}$.\\
\indent In the equation $x(xy+z^2)+Aw^3=N$, $y$ gets fixed once $x, z, w$ are chosen. There are at most $2P+1$ choices for $w$. Having chosen $w$, there are no solutions in $x, y, z$ with $|x|,|y|,|z|\leq P,\,x(xy+z^2)\neq 0$ unless $0<|N-Aw^3|\leq 2P^3$ in which case $x$ must still be a (positive or negative) divisor of $N-Aw^3$ giving $\ll P^{\varepsilon}$ choices for $x$ (with or without the condition $P^{\theta}<|x|\leq P$ though we are interested only in those $x$ which satisfy this additional condition) and then $z$ must be a solution of the congruence
\begin{equation*}
z^2\equiv \frac{N-Aw^3}{x} \pmod x
\end{equation*}
with $|z|\leq P$ giving $\ll \frac{P}{\sqrt{x}}+\sqrt{x}\ll \frac{P}{\sqrt{x}} \ll P^{1-\frac{\theta}{2}}$ choices for $z$ (by the case $k=2$ of lemma \ref{l:three0}). Thus there are $\ll P^{2-\frac{\theta}{2}+\varepsilon}$ solutions of $x(xy+z^2)+Aw^3=N$ with $P^{\theta}<|x|\leq P$.\\
\indent The lemma follows on adding the two estimates and taking $\theta =\frac{1}{3}$.
\qedhere
\end{proof}
\indent By (\ref{e:one16}), (\ref{e:two7}), (\ref{e:two8}) and lemma \ref{l:three3} we have
\begin{equation} \label{e:three5}
\int_{0}^{1} N_2F_1(\alpha)F_3(\alpha)\,e(-N\alpha)\,d\alpha \ll N_2P^{\frac{11}{6}+\varepsilon}
\ll P^{\frac{23}{6}+\varepsilon}.
\end{equation}
From the argument in \cite{c7i}, if $\Delta_2 \neq 0$ and $N$ is a large positive integer we have
\begin{equation} \label{e:three6}
\int_{0}^{1} N_1F_2(\alpha)F_3(\alpha)\,e(-N\alpha)\,d\alpha \ll N_1P^{1+3\varepsilon}
\ll P^{3+3\varepsilon}
\end{equation}
while in case $\Delta_2 \neq 0$ and $N=0$ we have
\begin{equation} \label{e:three00}
\int_{0}^{1} N_1F_2(\alpha)F_3(\alpha)\,e(-N\alpha)\,d\alpha = \delta_2 P^4+O(P^{3+\varepsilon}).
\end{equation}
In case $\Delta_2=0$, we have
\begin{equation} \label{e:three18}
\int_{0}^{1} N_1F_2(\alpha)F_3(\alpha)\,e(-N\alpha)\,d\alpha \ll N_1P^{\frac{11}{6}+\varepsilon}
\ll P^{\frac{23}{6}+\varepsilon}.
\end{equation}
\indent Since the total length of the major arcs is less that $P^{3\delta -3}$, from
(\ref{e:one17}),
$$\int_{0}^{1}
F_1(\alpha)F_2(\alpha)F_3(\alpha)\,e(-N\alpha)\,d\alpha=
\left(\int_{\mathfrak m}+\int_{\mathfrak M}\right)
F_1(\alpha)F_2(\alpha)F_3(\alpha)\,e(-N\alpha)\,d\alpha$$
$$=\int_{\mathfrak m}
F_1(\alpha)F_2(\alpha)F_3(\alpha)\,e(-N\alpha)\,d\alpha+\int_{\mathfrak
M}F(\alpha)\,e(-N\alpha)\,d\alpha+O(P^{3+3\delta}).$$
\noindent Along with (\ref{e:one11}), (\ref{e:one14}), (\ref{e:three5}),
(\ref{e:three6}), (\ref{e:three00}), (\ref{e:three18}) and lemma \ref{l:three2}, this gives
\begin{equation} \label{e:three7}
R(N)=\delta P^4\chi(N)+\int_{\mathfrak
M}F(\alpha)\,e(-N\alpha)\,d\alpha+O(P^{\frac{23}{6}+\varepsilon}+P^{3+3\delta}+P^{4+\varepsilon-\frac{\delta}{4}})
\end{equation}
where $\delta=\delta_1$ in case $N$ is a positive integer, $\delta=\delta_1+\delta_2=\delta_0$ in case $N=0$.
\Section{The Singular Series}
\label{sseries}
\noindent Define ${\mathfrak{S}}(N,Q)$ and $S(q;N)$ by
\begin{equation} \label{e:six1}
{\mathfrak{S}}(N,Q)=\sum_{q\leq Q}S(q;N)=\sum_{q\leq Q}\sum_{\substack{a=1\\(a,q)=1}}^{q} q^{-7}S(q,a)e(-aN/q)
\end{equation}
where 
\begin{equation} \label{e:six2}
S(q,a)=\sum_{{\bf z} \bmod q} e\left( \frac{a}{q}f({\bf z})\right) %S(q;N)=\sum_{\substack{a=1\\(a,q)=1}}^{q} q^{-7}S(q,a)e(-aN/q)
.
\end{equation}
We have the following factorization.
\begin{equation} \label{e:six3}
S(q,a)= S_1(q,a)S_2(q,a)S_3(q,a)
\end{equation}
where $$S_i(q,a)=\sum_{x,y,z = 1}^{q} e \left(\frac{a}{q} L_i(x,y,z)Q_i(x,y,z)\right)$$
for $i=1,2$ and $$S_3(q,a)=\sum_{x = 1}^{q} e\left( \frac{aa_7}{q}x^3 \right).$$
\noindent $S(q;N)$ is a multiplicative function of $q$.
In fact, if $(q_1,q_2)=(a,q_1q_2)=1$ and we choose $a_1,a_2$ so that $a_2q_1+a_1q_2=1$
then for $i=1,2,3
$\begin{equation} \label{e:six4}
S_i(q_1q_2,a)=S_i(q_1q_2,a_1q_2+a_2q_1)=S_i(q_1,a_1)S_i(q_2,a_2).
\end{equation}
This follows from the fact that as $r_1$ and $r_2$ run through
a complete (or reduced) set of residues modulo $q_1$ and $q_2$ respectively, $r_2q_1+r_1q_2$ runs through
a complete (or reduced) set of residues modulo $q_1q_2$.
Thus it suffices to study $S_i(q,a)$ when $q$ is a prime power. We can also
assume that $L_1Q_1$, $L_2Q_2$ have content 1. For, if $g({\bf x})%=g(x_1,\ldots,x_s)
$ is a polynomial in $s$ variables with content $c$, say
$g({\bf x})=cg_1({\bf x})$, $(a,q)=1$, $(c,q)=c_0$ then $c_0$ is bounded in 
terms of $g$ alone and writing $c=c_0c'$ we have $(q/c_0,ac')=1$ and
$$\sum_{{\bf x} \bmod q} e\left( \frac{ag({\bf x})}{q} \right)
=\sum_{{\bf x} \bmod q} e\left( \frac{acg_1({\bf x})}{q} \right)
=\sum_{{\bf x} \bmod q} e\left( \frac{ac'g_1({\bf x})}{q/c_0} \right)$$
$$=c_0^s\sum_{{\bf x} \bmod (q/c_0)} e\left( \frac{ac'g_1({\bf x})}{q/c_0}\right).$$
Thus a bound in content 1 case leads to a bound in the general case with
a different implied constant but which depends on the polynomial $g$ alone. For example,
if $p \nmid a$, $(a,q)=1$, the standard bounds for $\sum_{x = 1}^{q} e(ax^3/q)$ (see \cite{VaughanBook}) lead to
\begin{equation} \label{e:six0}
S_3(p,a)\ll p^{1/2} \mbox{ , } S_3(p,a)\ll p
\end{equation}
and, in general
\begin{equation} \label{e:six5}
|S_3(q,a)|\leq C (a_7,q)(q/(a_7,q))^{2/3}=C(a_7,q)^{1/3}q^{2/3} \leq
Ca_7^{1/3}q^{2/3} \ll q^{2/3}.
\end{equation}
\indent Thus, in the sum
$S_1(p^k,a)%=\sum_{x_1,x_2,x_3=1}^{p^k}e(\frac{a}{p^k}L_1(x_1,x_2,x_3)Q_1(x_1,x_2,x_3))
$,
by renaming the variables if needed, we can further assume that $p\nmid a_1.$
Let
\begin{equation} \label{e:six6}
{\mathfrak D_1}=\left\{ \begin{array}{lcl}
2\Delta_1A' & \mbox{ if } & \Delta_1A'\neq 0\\
2\Delta_1C' & \mbox{ if } & A'=0 \mbox{ but } \Delta_1C'\neq 0\\
B' & \mbox{ if } & A'=C'=0,\, B' \neq 0 \mbox{ (so } \Delta_1 \neq 0)\\
2A' & \mbox{ if } & \Delta_1=0,\, A'(2A'F'-B'G')\neq 0\\
2C' & \mbox{ if } & \Delta_1=0,\, C'(2C'G'-B'F') \neq 0\\
\end{array}
\right.
\end{equation}
and consider first the primes $p$ not dividing ${\mathfrak D_1}.$ In \cite{c7i} we showed that
\begin{equation} \label{e:six7}
 |S_1(q,a)|<q^2d(q)\ll q^{2+\varepsilon} \mbox{ if }\Delta_1\neq 0 \mbox{ and } (q,{\mathfrak D_1})=1.
\end{equation}
Now we consider the case $\Delta_1=0$, $A'(2A'F'-B'G')\neq 0$; the case $\Delta_1=0$, $C'(2C'G'-B'F')\neq 0$ is similar. With $x_1',x_2',x_3'$ as in
(\ref{e:two1}), (\ref{e:two7}), we have
$$4A'a_1^2L_1(x_1,x_2,x_3)Q_1(x_1,x_2,x_3)=x_1'(x_1'x_3'+x_2'^2)$$
and the determinant of the linear transformation
$(x_1,x_2,x_3)\mapsto (x_1',x_2',x_3')$ is $2a_1A'(2A'F'-B'G')$. Since $p \nmid {\mathfrak D_1}= 2A'(2A'F'-B'G')$,
as $(x_1,x_2,x_3)$ runs through a complete residue system modulo $p^k$ so does
$(x_1',x_2',x_3')$ and as $(u,v,w)$ runs through a complete residue system so
does $(4a_1^2A'u,v,w)$. Hence,
$$S_1(p^k,a)=\sum_{x_1,x_2,x_3=1}^{p^k}e\left(
\frac{a}{p^k}L_1(x_1,x_2,x_3)Q_1(x_1,x_2,x_3)\right)$$
%$$=\sum_{x_1,x_2,x_3=1}^{p^k}e\left(\frac{ax_1'(x_1'x_3'+x_2'^2)}{p^k4a_1^2A'} \right)$$
$$=\sum_{x_1',x_2',x_3'=1}^{p^k}e\left(
\frac{ax_1'(x_1'x_3'+x_2'^2)}{p^k4a_1^2A'} \right)=\sum_{u,v,w=1}^{p^k}e\left(
\frac{a4a_1^2A'u(4a_1^2A'uw+v^2)}{p^k4a_1^2A'}\right)$$
$$=\sum_{u,v,w=1}^{p^k}e\left(
\frac{a}{p^k}(A''uv^2+u^2w)\right)=\sum_{u=1}^{p^k}\left\{ \sum_{v=1}^{p^k}e\left(
\frac{aA''uv^2}{p^k}\right) \sum_{w=1}^{p^k}e\left(
\frac{au^2w}{p^k}\right) \right\}
$$
where $A''=4a_1^2A'$. For a fixed $u= p^ju_1$, $p\nmid u_1$, the sum over $v$ is equal to
$$p^j\sum_{v_2=1}^{p^{k-j}}
e\left(\frac{aA''u_1}{p^{k-j}}v_2^2\right)=
\left\{
\begin {array}
{lc}p^{(k+j)/2} & \mbox{ if  $k-j$  is even }\\
p^{(k+j)/2} \left( \frac{aA''u_1}{p} \right) _L & \mbox{ if $k-j$  is odd and
$p\equiv 1 \pmod 4$}\\
\sqrt{-1} \mbox{ } p^{(k+j)/2}\left( \frac{aA''u_1}{p} \right) _L & \mbox{ if $k-j$  is
odd and $p\equiv 3 \pmod
4.$}
\end{array}\right.$$
The sum over $w$ is zero unless $p^k|u^2$ (i.e., $j\geq \lceil \frac{k}{2}\rceil$) in which case it is $p^k.$ Also, for a fixed value of $j$, there is only one value of $u$ mod $p^k$ if $j=k$ while 
there are $p^{k-j}(1-p^{-1})$ values of $u$ mod $p^k$ if $0\leq j\leq k-1$. Hence,
\begin{equation*}
S_1(p^k,a)=p^{2k}+\sum_{\substack{\lceil \frac{k}{2}\rceil \leq j<k \\ k-j \mbox{\small { even}}}}p^kp^{(k+j)/2} p^{k-j}(1-p^{-1})=p^{2k+\lfloor k/4\rfloor}.
\end{equation*}
Hence,
\begin{equation} \label{e:six8}
S_1(p,a)=p^2 \mbox{ if } p\nmid \mathfrak{D_1}
\end{equation}
and
\begin{equation} \label{e:six9}
|S_1(q,a)| \leq q^{\frac{9}{4}} \mbox{ if } (q, \mathfrak{D_1})=1.
\end{equation}
For the (finitely many) primes $p$ dividing ${\mathfrak D_1}$, as shown in \cite{c7i}
\begin{equation} \label{e:six10}
|S_1(p^k,a)|\ll p^{5k/2}
\end{equation}
with the implicit constant depending on the coefficients of the cubic form only.
\begin{lem} \label{l:six1}
The series \begin{equation} \label{e:six12}
{\mathfrak{S}}(N)=\sum_{q=1}^{\infty}S(q;N)
\end{equation} converges
absolutely and uniformly in $N$ (so that ${\mathfrak{S}}(N)\ll 1$), and for any $Q\geq 1$ we have
\begin{equation} \label{e:six13}
{\mathfrak{S}}(N)= {\mathfrak{S}}(N,Q)+O(Q^{-1/3+\varepsilon}).
\end{equation}
\end{lem}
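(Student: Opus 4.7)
The strategy is to reduce to prime-power estimates via multiplicativity of $S(q;N)$ (from (\ref{e:six3})--(\ref{e:six4})), combine the bounds on $S_1, S_2, S_3$ already established in this section, and apply Rankin's trick for the tail. Throughout I use $|S(p^k;N)|\leq p^{-6k}\max_a|S_1(p^k,a)S_2(p^k,a)S_3(p^k,a)|$, which follows from $\varphi(p^k)\leq p^k$.

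For a good prime $p\nmid 3a_7\mathfrak{D}_1\mathfrak{D}_2$, combining (\ref{e:six9}) (or (\ref{e:six7}) for $S_2$ if $\Delta_2\neq 0$) with (\ref{e:six5}) gives, even in the worst case $\Delta_1=\Delta_2=0$,
$$|S(p^k;N)| \ll p^{-6k+2(2k+\lfloor k/4\rfloor)+2k/3} \leq p^{-5k/6},$$
and for $k=1$ the Weyl bound (\ref{e:six0}) sharpens this to $|S(p;N)|\ll p^{-3/2}$. For the finitely many bad primes $p\mid 3a_7\mathfrak{D}_1\mathfrak{D}_2$, (\ref{e:six10}) and (\ref{e:six5}) yield $|S(p^k;N)|\ll p^{-k/3}$. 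Each good-prime Euler factor is therefore $1+O(p^{-3/2})$, and each of the finitely many bad-prime factors is a convergent geometric series, so the Euler product $\prod_p\bigl(1+\sum_{k\geq 1}S(p^k;N)\bigr)$ converges absolutely and uniformly in $N$, establishing $\mathfrak{S}(N)\ll 1$.

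For the tail (\ref{e:six13}), apply Rankin's trick with $\sigma=1/3-\varepsilon$:
$$\sum_{q>Q}|S(q;N)| \leq Q^{-\sigma}\sum_q q^\sigma|S(q;N)|.$$
The twisted Euler product $\prod_p\bigl(1+\sum_{k\geq 1} p^{k\sigma}|S(p^k;N)|\bigr)$ converges: at the finitely many bad primes, $p^{k\sigma-k/3}=p^{-k\varepsilon}$ is geometrically summable in $k$; at good primes, the sharper $|S(p;N)|\ll p^{-3/2}$ gives $p^{-7/6-\varepsilon}$ at $k=1$ and the crude bound gives $p^{-k(1/2+\varepsilon)}$ for $k\geq 2$, producing an Euler factor of the form $1+O(p^{-1-2\varepsilon})$, and $\sum_p p^{-1-2\varepsilon}$ converges. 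Hence $\sum_q q^\sigma|S(q;N)|\ll 1$, giving $\sum_{q>Q}|S(q;N)|\ll Q^{-1/3+\varepsilon}$.

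The main subtlety is that the decay exponent $e(k)$ in $|S(p^k;N)|\ll p^{-e(k)}$ is not monotone in $k$---the floor $\lfloor k/4\rfloor$ in (\ref{e:six9}) combined with the behaviour of (\ref{e:six5}) gives the smallest ratio $e(k)/k=5/6$ (attained for $k$ divisible by $12$)---but this is still comfortably above the threshold $1/3$ needed for Rankin's trick, and the $k=1$ term must be treated separately via the Weyl bound for the final Euler product over primes to converge.
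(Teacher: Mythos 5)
Your proof is correct and follows essentially the same route as the paper: multiplicativity reduces everything to the prime-power bounds (\ref{e:six5})--(\ref{e:six10}), and Rankin's trick with exponent $1/3-\varepsilon$ handles the tail (the paper merely organizes this by first splitting $q$ into its parts composed of primes dividing and not dividing $\mathfrak{D}_1\mathfrak{D}_2$ before twisting, and records the $k=1,2$ terms separately exactly as you do). The only slip is harmless: the minimal ratio $e(k)/k=5/6$ in your closing remark is attained when $4\mid k$, not $12\mid k$.
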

\begin{proof} Writing $q=q_1q_2$ where $$q_1=\prod_{p^k \|q, p\nmid \mathfrak D_1
\mathfrak D_2}p^k,\mbox{	} q_2=\prod_{p^k \|q, p|\mathfrak D_1 \mathfrak D_2}p^k,$$ from (\ref{e:six3})-(\ref{e:six5}), (\ref{e:six7})-(\ref{e:six10}), we have
$$\sum_{q>Q}|S(q;N)|\ll \sum_{q_2} q_2^{-1/3} \sum_{q_1>Q/q_2} S(q_1;N)$$
$$\leq \sum_{q_2} q_2^{-\frac{1}{3}} \left(\frac{Q}{q_2}\right)^{-\frac{1}{3}+\varepsilon}
\sum_{q_1>Q/q_2} S(q_1;N)q_1^{\frac{1}{3}-\varepsilon}
= Q^{-\frac{1}{3}+\varepsilon}\sum_{q_2} q_2^{-\varepsilon} \sum_{q_1} S(q_1;N)q_1^{\frac{1}{3}-\varepsilon}$$
$$=Q^{-\frac{1}{3}+\varepsilon}\prod_{p|\mathfrak D_1 \mathfrak D_2} \left(1-p^{-\varepsilon}\right)^{-1} \prod_{p\nmid \mathfrak D_1 \mathfrak D_2} \left(1+\sum_{k=1}^{\infty}S(p^k;N)p^{(\frac{1}{3}-\varepsilon)k}\right)$$
$$\ll Q^{-\frac{1}{3}+\varepsilon}
%\prod_{p|\mathfrak D_1 \mathfrak D_2} \left(1-p^{-\varepsilon}\right)^{-1} 
\prod_{p\nmid \mathfrak D_1 \mathfrak D_2} \left(1+O\left(p^{2+2+\frac{1}{2}+1-7+\frac{1}{3}-\varepsilon}+p^{4+4+1+2-14+\frac{2}{3}-2\varepsilon}+\sum_{k=3}^{\infty}p^{(\frac{9}{4}+\frac{9}{4}+\frac{2}{3}+1-7+\frac{1}{3}-\varepsilon)k}\right)\right)$$
$$=Q^{-\frac{1}{3}+\varepsilon}
%\prod_{p|\mathfrak D_1 \mathfrak D_2} \left(1-p^{-\varepsilon}\right)^{-1}
\prod_{p\nmid \mathfrak D_1 \mathfrak D_2} \left(1
%+O(p^{-\frac{7}{6}-\varepsilon})
+O(p^{-\frac{7}{6}-\varepsilon})\right) \ll Q^{-\frac{1}{3}+\varepsilon}.
\qedhere
$$
\end{proof}
\Section{Completion of the proofs of the theorems} \label{completion}
\noindent The treatment of the major arcs is exactly the same as in \cite{c7i} so we omit the details. The congruence conditions are still given by Theorem 2 of \cite{c7i}; we repeat the (rather long!) definitions of $\gamma$ and $\gamma'$ for completeness. Write $f(\bf x)$ in the form 
$f({\bf x})=c(c_1L_1'Q_1'+c_2L_2'Q_2'+c_3x_7^3)$
where $c$ is the content of $f$ while $L_1'Q_1'$, $L_2'Q_2'$ have content $1$ and let $j_1,j_2,j_3\geq 0$ be integers such that $p^{j_i}\|c_i$ for $i=1,2,3$ and let $\nu_0=\max\{j_1,j_2,j_3\}$. Let $A',B',C',F',G'$ be as in (\ref{e:two10}) but with $L_1Q_1$ replaced by $L_1'Q_1'$. For a prime $p$ dividing $3c_1c_2c_3$, define $\gamma_1$ and $\gamma_1'$ as follows.\\
(i) If $L_1^2|Q_1 \pmod {p}$, choose $\alpha_1, \beta_1$ so that
$p^{\alpha_1}\|(A',B',C')$, $\beta_1=0$ if $(F',G')=(0,0)$, $p^{\beta_1}\|(F',G')$
if $(F',G')\neq (0,0)$. Then
\begin{equation*}
\gamma_1'=\left\{ \begin{array}{lcl}
\lceil \frac{5\alpha_1+1}{3}\rceil & \mbox{ if } & \beta_1=0\\
\max \{\lceil \frac{5\alpha_1+1}{3} \rceil, \lceil
\frac{4\alpha_1+1-\beta_1}{2} \rceil \}
 & \mbox{ if } & \beta_1 \geq 1
\end{array}
\right.,
\gamma_1=\left\{ \begin{array}{lcl}
2 \gamma_1'+1 & \mbox{ if } & p=3\\
2 \gamma_1'-1
 & \mbox{ if } & p \neq 3.
\end{array}
\right.
\end{equation*}
(ii) If $p=2$ and $L_1'Q_1'$ is
$$L_1'x_2(L_1'+x_2), \mbox{ } L_1'x_3(L_1'+x_3), \mbox{ }
L_1'(x_2^2+x_3^3+L_1'x_2+L_1'x_3)=L_1'x_2(L_1'+x_2)+L_1'x_3(L_1'+x_3)$$
when reduced mod 2 (for example, $L_1Q_1=x_1(x_1x_2+(x_2+2x_3)^2)$), then $\gamma_1=\gamma_1'=1.$\\
(iii) If $p=3$ and when reduced mod 3 $L_1'Q_1'$ is equivalent to
$$L_1'(L_1'^2+2x_2'^2) \mbox{ or } 2L_1'(L_1'^2+2x_2'^2)$$
for some linear combination $x_2'$ of $x_1,x_2,x_3$ (for example, $L_1Q_1=x_1(2x_1(x_1+3x_2)+x_3^2$)), then $\gamma_1=3,\gamma_1'=1.$\\
(iv) In other cases, let $\gamma_1=\gamma_1'=0.$\\
\noindent Define $\gamma_2$ and $\gamma_2'$ similarly in terms of the coefficients of $L_2'Q_2'$. Finally, let
$$\gamma'=\gamma'(p,f)=\min\{\gamma_1'+j_1,\gamma_2'+j_2\}$$
and
$$\gamma=\gamma(p,f)=
\left\{
\begin{array}{cl} \min\{\gamma_1+\nu_0,\gamma_2+\nu_0, 2\gamma'+1\} & \mbox{if } p=3\\
\min\{\gamma_1+\nu_0,\gamma_2+\nu_0, 2\gamma'-1\} & \mbox{otherwise.}
\end{array}\right.$$
%where $\delta_3(p)=$ 1 if $p=3$, 0 otherwise.\\\\
\begin{small}
{\bf Acknowledgments.} I would like to thank Prof. Robert C. Vaughan for providing me the manuscript of his yet to be published work \cite{VaughanEll} and for many useful discussions while I was his student.
\end{small}


\begin{thebibliography}{99}
\bibitem{Manin} Franke, J., Manin, Yu. I., Tschinkel, Yu.: Rational points of bounded height on Fano varieties. Invent. Math. {\bf 95}(2), 421-435 (1989)
\bibitem{VaughanBook} Vaughan, R.C.: The Hardy-Littlewood Method, 2nd edn.
Cambridge Tracts in Mathematics, vol. 125. Cambridge University Press, Cambridge (1997)
\bibitem{VaughanEll} Vaughan, R.C.: Integer points on elliptic curves. Rocky Mountain Journal of Mathematics (to appear in 2013)
\bibitem{c7i} Verma, M.: Representation of integers by a family of cubic forms. Ramanujan Journal (to appear, available online)% {\bf 00}, 000-000 (2014?)
\end{thebibliography}
\end{document}